\title{On Quasi-isometry and Choice}
\author{Samuel M. Corson}
\newtheorem*{SQHS}{Theorem \ref{SQHS}}
\newtheorem*{Notimplychoice}{Theorem \ref{Notimplychoice}}
\newtheorem*{Bottlenecktheorem}{Theorem \ref{Bottlenecktheorem}}
\newtheorem{theorem}{Theorem}
\newtheorem{corollary}[theorem]{Corollary}
\newtheorem{proposition}[theorem]{Proposition}
\theoremstyle{definition}
\newtheorem{lemma}[theorem]{Lemma}
\newtheorem*{theorem*}{Theorem}
\newcommand{\Lev}{\operatorname{Lev}}
\begin{document}
\subjclass[2010]{03E25}
\keywords{quasi-isometry,  axiom of choice}

\begin{abstract}  In this note we prove that the symmetry of the quasi-isometry relation implies the axiom of choice, even when the relation is restricted to geodesic hyperbolic spaces.  We show that this result is sharp by demonstrating that symmetry of quasi-isometry in an even more restrictive setting does not imply the axiom of choice.  The ``Bottleneck Theorem'' of Jason Fox Manning \cite{Ma} also implies choice.
\end{abstract}

\maketitle

\begin{section}{Introduction}

A standard benchmark for the deductive strength of a theorem is that it implies the axiom of choice (see for example \cite{K},\cite{HL}, \cite{Hod}, \cite{B},\cite{How}).  By this we mean that by assuming the Zermelo-Fraenkel axioms of set theory (without the axiom of choice) and the theorem under consideration, the axiom of choice can be deduced (see \cite{TZ} for a listing of these axioms).  Let \textbf{ZF} denote the Zermelo-Fraenkel axioms without the axiom of choice.  The axiom of choice cannot be proved from \textbf{ZF} (see \cite{C}).  Thus if a theorem implies choice then a proof of the theorem requires choice or some stronger assumption in set theory.

We demonstrate the deductive strength of two theorems in metric space theory.  We start with some definitions.  If $(S, d_S)$ and $(T, d_T)$ are metric spaces, a function $f:S \rightarrow T$ is a \textbf{quasi-isometry} if there exists $N\in \omega$ such that $B(f(S), N)  = T$ and for all $x,y\in S$,

\begin{center}  $\frac{1}{N}d_S(x, y) - N \leq d_T(f(x), f(y)) \leq Nd_S(x, y) +N$
\end{center}

\noindent where $B(J, p)$ is the closed neighborhood $\{x\in T: d_T(x, J) \leq p\}$.  This definition differs slightly from the standard one which uses two or three parameters (e.g. Definition 8.14 in \cite{BH}) but our definition is easily seen to be equivalent.  The notion of quasi-isometry is extensively used in geometric group theory (see for example \cite{G1}, \cite{G2}).  In case there exists a quasi-isometry from $S$ to $T$ we say $S$ is quasi-isometric to $T$.  It is easy to see that the quasi-isometry relation is reflexive and transitive (in particular this can be proven without using the axiom of choice).  It is a standard exercise to prove that the quasi-isometry relation is symmetric, and we shall see that the proof unavoidably utilizes the axiom of choice.  That is - the symmetry of the quasi-isometry relation on metric spaces implies the axiom of choice (see Corollary \ref{SQC}).

It is natural to ask whether the symmetry of quasi-isometry in more restrictive settings implies choice.  Recall that a metric space $S$ is \textbf{geodesic} if for any two points $x,y\in S$ there is an isometric embedding $\rho:[0, d_S(x,y)] \rightarrow S$  with $\rho(0) = x$ and $\rho( d_S(x,y)) = y$ (the image of which is called a \textbf{geodesic segment}).  Geodesic segments need not be unique, but a choice of geodesic segment for points $x, y \in S$ will be denoted $[x,y]$.  A geodesic space $S$ is $\delta$-\textbf{hyperbolic} if  for any $x,y,z\in S$  we have $[x,y]\subseteq B([x,z]\cup[y,z], \delta)$ and is \textbf{hyperbolic} if it is $\delta$-hyperbolic for some $\delta$. Although there is a definition for hyperbolicity in a non-geodesic setting (involving the Gromov product), \textit{all hyperbolic spaces in this paper will be geodesic}.  Obviously $\delta_1$-hyperbolicity implies $\delta_0$-hyperbolicity if $\delta_0 \geq\delta_1$. If $S$ is $\delta$-hyperbolic then by scaling the metric by $\lambda>0$ one sees that $S$ is quasi-isometric (via the identity map) to a $\lambda\delta$-hyperbolic space.  A $0$-hyperbolic space is more commonly called an $\mathbb{R}$-tree, and $0$-hyperbolicity implies that geodesics are unique.

Let \textbf{SQHS} (symmetry of quasi-isometry on hyperbolic spaces) denote the assertion that if hyperbolic space $S$ is quasi-isometric to hyperbolic space $T$, then $T$ is quasi-isometric to $S$.  We prove the following:

\begin{theorem}\label{SQHS}  \textbf{SQHS} implies the axiom of choice.
\end{theorem}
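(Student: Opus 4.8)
The plan is to derive the axiom of choice in the form ``every family of nonempty sets admits a choice function.'' Fix a family $\{A_i\}_{i \in I}$ of nonempty sets; after a routine disjointification I may assume the $A_i$ are pairwise disjoint, disjoint from $I$, and (replacing each $A_i$ by $A_i \times \omega$ if I wish) that every $A_i$ is infinite, so that the elementary cardinal manipulations below go through in \textbf{ZF}. I then want to attach to $\{A_i\}_{i\in I}$ two $0$-hyperbolic spaces ($\mathbb{R}$-trees) $S$ and $T$ and an \textbf{ZF}-definable quasi-isometry $f\colon S \to T$, in such a way that \textbf{SQHS} forces the existence of a quasi-isometry $g\colon T\to S$ from which a choice function for $\{A_i\}_{i\in I}$ can be read off in \textbf{ZF}.

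First I would build $T$ so as to carry the combinatorial content of the family: over a common basepoint, $T$ has for each $i$ a ``tentacle'' modelled on the tree of finite sequences $A_i^{<\omega}$, realised as a metric $\mathbb{R}$-tree with unit edges, so that an element of $A_i$ is essentially the first letter of a branch of that tentacle, and so that this data lives in the \emph{coarsely visible} part of $T$ --- the way in which ends escape to infinity --- rather than in bounded decorations, which quasi-isometries ignore. The space $S$ is to be a modification of $T$ in which, tentacle by tentacle, the $A_i$-branching is reorganised along a distinguished ray (yielding a preferred ordering of the branch points) while keeping $S$ quasi-isometric to $T$; the map $f$ is a coarsely surjective quasi-isometry, injective off a controlled family of bounded pieces, whose explicit quasi-isometry constants, coarse-surjectivity radius, and hyperbolicity constant are all computed without choice. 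Writing $f$ down is then unproblematic in \textbf{ZF}, but any set-theoretic ``inverse'' of $f$ would have to pick, for each $i$, a branch of the $i$-th tentacle.

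Next, apply \textbf{SQHS}: since $S$ is quasi-isometric to $T$ via $f$, there is a quasi-isometry $g\colon T \to S$ with some parameter $M \in \omega$. The heart of the argument is to show that $g$ is rigid enough to recover the choices. Here I would invoke the stability of quasi-geodesics in $\mathbb{R}$-trees --- that an $M$-quasi-geodesic ray lies within a Hausdorff distance effectively bounded in terms of $M$ of a genuine geodesic ray --- which is provable in \textbf{ZF} because $\mathbb{R}$-trees are combinatorially transparent. Applying this to the images under $g$ of the geodesic rays making up the tentacles of $T$, together with coarse surjectivity of $g$, should force $g$ to send the $i$-th tentacle of $T$ into a bounded neighbourhood of exactly one tentacle of $S$, and to respect the branching near infinity tightly enough that, using the preferred ordering of branch points built into $S$, the image determines --- canonically and uniformly in $i$ --- a first letter, hence an element of $A_i$. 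Collecting these over $i\in I$ produces a choice function, constructed in \textbf{ZF} from $g$, $S$, $T$, $f$, which is the required deduction.

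The main obstacle is exactly the interaction of the construction in the second paragraph with the rigidity argument in the third: the set-theoretic data $\{A_i\}_{i\in I}$ must be hidden in features of $T$ that survive quasi-isometry (so, in the branching structure out at infinity, since bounded decorations would make a reverse quasi-isometry trivially \textbf{ZF}-definable), while $S$ must nonetheless stay genuinely quasi-isometric to $T$ in a \textbf{ZF}-verifiable way; and one must pin down how an \emph{arbitrary} reverse quasi-isometry of these non-proper $\mathbb{R}$-trees can act on the ends, so that it can neither collapse nor irreparably scramble the information it is meant to reveal. Threading this needle --- and verifying the needed stability and end-tracking statements with only \textbf{ZF} available --- is where the real work lies.
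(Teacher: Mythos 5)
Your plan is built on a pair of $\mathbb{R}$-trees $S$ and $T$ with a \textbf{ZF}-definable quasi-isometry $f\colon S\to T$, and it proposes to extract a choice function from a reverse quasi-isometry supplied by \textbf{SQHS}. This cannot work, and the obstruction is exactly the paper's Theorem \ref{Notimplychoice}: in \textbf{ZF} alone, whenever the domain of a quasi-isometry is an $\mathbb{R}$-tree, an explicit reverse quasi-isometry can be constructed (via the map $h(x)=y$ with $[z,y]=\bigcap_{y_0\in f^{-1}(B(x,N))}[z,y_0]$). So the instance of \textbf{SQHS} you invoke is already a theorem of \textbf{ZF}, and if your rigidity/end-tracking argument could read a choice function off an arbitrary reverse quasi-isometry in \textbf{ZF}, then \textbf{ZF} alone would prove the axiom of choice, contradicting Cohen's independence result. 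No amount of care in the construction of $S$, $T$ and the stability lemmas can repair this; the class of $0$-hyperbolic spaces is precisely the setting the paper singles out as too narrow to carry any choice strength.

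The paper evades this by making the ``large'' space $\Gamma_0$ genuinely $2$-hyperbolic and not a tree: each level of each $X$-arm is a complete doubled-edge graph on $X$, of diameter at most $2$. Note that this also inverts your heuristic about where to hide the data. The elements of the sets $X\in\mathcal{Z}$ are stored in \emph{bounded} decorations, which is why the collapsing map $\Gamma_0\to\Gamma_1$ onto a choice-free simplicial tree is \textbf{ZF}-definable, while any reverse quasi-isometry $g\colon\Gamma_1\to\Gamma_0$ must land at specific points inside those bounded pieces; Proposition \ref{thebigproposition} then shows that for each $X$ there is a canonical vertex $w$ of the tree (at distance $2K$ from a fixed basepoint) with $g(w)$ in the $X$-arm, and the first coordinate of $g(w)$ (or of the label of the edge containing it) selects an element of $X$. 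The choice strength lies not in the coarse geometry at infinity (which is the same for $\Gamma_0$ and $\Gamma_1$) but in the fact that a specific map must make specific selections inside sets of bounded diameter. If you want to salvage your approach, you must replace your $\mathbb{R}$-trees by a space whose geodesics are non-unique in a way that encodes $\mathcal{Z}$, which is essentially the paper's construction.
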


From Theorem \ref{SQHS} one immediately obtains the result mentioned earlier:

\begin{corollary}\label{SQC}  The symmetry of quasi-isometry on metric spaces implies the axiom of choice.
\end{corollary}

One can ask whether Theorem \ref{SQHS} can be strengthened by further restricting the symmetry of quasi-isometry to a smaller class of metric spaces.  The most natural choice would be the restriction to the class of $\mathbb{R}$-trees.  It turns out that this restriction is too narrow to imply the axiom of choice.  In particular we have the following, which demonstrates the sharpness of Theorem \ref{SQHS}:

\begin{theorem}\label{Notimplychoice}  The symmetry of the quasi-isometry relation between $\mathbb{R}$-trees follows from \textbf{ZF} alone.
\end{theorem}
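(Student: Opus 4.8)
The plan is to show that whenever an $\mathbb{R}$-tree $S$ is quasi-isometric to an $\mathbb{R}$-tree $T$, one can construct a quasi-isometry back from $T$ to $S$ in a canonical, choice-free way. The obstacle in the general metric-space case is that inverting a quasi-isometry $f:S\to T$ requires, for each $t\in T$, selecting some $x\in S$ with $d_T(f(x),t)$ small — a choice across $|T|$-many nonempty sets. For $\mathbb{R}$-trees, however, we can exploit the rigidity of trees (unique geodesics, convexity of balls, existence of medians) to define such a selection without invoking choice. So the first step is to fix a quasi-isometry $f:S\to T$ with constant $N$, and to recall/establish the basic $\mathbb{R}$-tree facts we need: geodesics are unique; for any three points there is a unique median point; closed balls and metric neighborhoods of connected sets are themselves subtrees; and any two points lie in a unique arc. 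None of these facts uses choice.

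Next I would build the inverse map explicitly. Fix a basepoint $s_0\in S$ (if $S$ is empty the statement is trivial; otherwise one needs a single point, which is a choice from \emph{one} set and hence harmless). For $t\in T$, consider the (nonempty, since $B(f(S),N)=T$) set $f^{-1}(B(t,N))\subseteq S$; I want to pull out a canonical point of it. The key trick: use the geodesic from $f(s_0)$ toward $t$. Concretely, since $T$ is an $\mathbb{R}$-tree, the point $t$ is joined to $f(s_0)$ by a unique arc $[f(s_0),t]$; walk along $f(S)$ within bounded distance of this arc. More cleanly, one can take the infimum $r_t=d_S(s_0, f^{-1}(B(t,N)))$ and then argue that among points of $f^{-1}(B(t,N))$ at distance close to $r_t$ from $s_0$, the \emph{median} with $s_0$ and some other canonical reference point is forced to be (nearly) unique, using $0$-hyperbolicity to pin it down up to the quasi-isometry slack. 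Define $g(t)$ to be this canonically determined point (one must check the defining description determines a genuine single element of $S$, not a set — this is where $0$-hyperbolicity does the real work, replacing the choice function). Then verify directly that $g:T\to S$ satisfies the quasi-isometry inequalities with a constant $N'$ depending only on $N$, and that $B(g(T),N')=S$: both follow from the standard computation that $f\circ g$ is within bounded distance of $\mathrm{id}_T$ and $g\circ f$ within bounded distance of $\mathrm{id}_S$, which in turn uses only the defining inequalities for $f$ and the triangle inequality.

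The main obstacle I anticipate is making the ``canonical point'' construction actually canonical: naively the set $f^{-1}(B(t,N))$ can be an arbitrary subset of $S$ with no distinguished element, so the argument must genuinely use the tree structure to single one out. The cleanest route is probably to not pick a point of $S$ at all, but instead to define $g(t)$ as a point on a \emph{geodesic in $S$}: fix $s_0$, pick any $s_1\in f^{-1}(B(t_1,N))$ for one fixed $t_1$... no — that reintroduces choice. Better: observe that the union $\bigcup_{x\in f^{-1}(B(t,N))}[s_0,x]$ is a subtree of $S$ containing $s_0$, and by $0$-hyperbolicity and the quasi-isometry bound this subtree has a well-defined ``tip'' or branch point canonically associated to $t$ (its points farthest from $s_0$ form a bounded set; take its circumcenter in the tree, which \emph{is} unique in an $\mathbb{R}$-tree). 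Set $g(t)$ equal to this circumcenter. Uniqueness of circumcenters of bounded sets in $\mathbb{R}$-trees is the choice-free substitute for the selection function, and once $g$ is defined this way, all remaining verifications are routine triangle-inequality estimates. I would close by noting this shows \textbf{SQHS} cannot be improved to the class of $\mathbb{R}$-trees, matching the sharpness claim.
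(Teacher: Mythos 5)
Your overall strategy is the paper's: use the tree structure of the \emph{source} to define, choice-freely, a canonical point of $f^{-1}(B(t,N))$-type sets relative to a fixed basepoint, and then run routine triangle-inequality estimates to see the resulting map is a quasi-isometry (the paper does exactly this, with constant $9N^2$). The verification half of your plan is fine. The gap is in the selection device itself. Your final proposal is to take the circumcenter (of the ``farthest points from $s_0$'' in the subtree $\bigcup_{x\in f^{-1}(B(t,N))}[s_0,x]$, or implicitly of the bounded set $f^{-1}(B(t,N))$). Two things go wrong. First, the set of points farthest from $s_0$ may be empty: the supremum of $d(s_0,\cdot)$ over an arbitrary bounded set need not be attained, so the object whose circumcenter you want may not exist. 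Second, and more fundamentally, circumcenters of bounded sets in $\mathbb{R}$-trees are unique \emph{when they exist}, but existence requires completeness (the Bruhat--Tits-type argument produces a Cauchy sequence of approximate centers and needs a limit point), and the theorem assumes nothing about completeness. Concretely, let $S$ contain a half-open spine isometric to $[0,1)$ with, at each point $1-\tfrac1n$, an attached segment of length $1-\tfrac1n$ ending at a point $b_n$; the bounded set $\{b_n\}$ has circumradius $1$, approached only as one moves toward the missing endpoint of the spine, so no circumcenter exists in $S$. Since $f$ is merely a quasi-isometry, $f^{-1}(B(t,N))$ can be exactly such a set, and your $g(t)$ is then undefined. (Your earlier alternatives --- the ``nearly unique median'' and the infimum $r_t$ --- are not pinned down enough to rescue this.)

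The paper's device avoids the issue by using an \emph{intersection} rather than a union: fix $s_0$ and define $g(t)=y$ where $[s_0,y]=\bigcap_{x\in f^{-1}(B(t,N))}[s_0,x]$. This intersection is a closed subset of a single compact geodesic segment $[s_0,x_0]$ (any one $x_0$ in the preimage), closed under taking initial subsegments, so it equals $[s_0,y]$ for a unique $y\in S$ --- no completeness needed, and compactness of one segment replaces the limit argument. One then checks that the preimage has diameter at most $3N^2$, hence $y$ is within $3N^2$ of every point of $f^{-1}(B(t,N))$, and your intended estimates go through verbatim. Your approach could in principle be repaired by passing to the metric completion of $S$ and then sliding back along $[s_0,c]$ into $S$, but as written the construction is not well defined for general $\mathbb{R}$-trees, which is the crux of the theorem.
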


In fact we prove the stronger claim that if $f:T \rightarrow S$ is a quasi-isometry with $T$ an $\mathbb{R}$-tree then there exists a quasi-isometry $h: S\rightarrow T$.  This highlights the contrast between $0$-hyperbolic spaces and those which are simply $\delta$-hyperbolic with $\delta>0$ (and therefore quasi-isometric to a $\delta'$-hyperbolic space for any $\delta'>0$ via scaling).

We move on to another result in quasi-isometry.  In \cite{Ma} Jason Fox Manning proved the following (Theorem 4.6), which we call the Bottleneck Theorem:

\begin{theorem*} (J. F. Manning)  Let $Y$ be a geodesic metric space.  The following are equivalent:

\begin{enumerate}\item There exists a simplicial tree $\Gamma$ which is quasi-isometric to $Y$.

\item There is some $\Delta >0$ so that for all $x, y \in Y$ there is a midpoint $m = m(x,y)$ with $d(x, m) = d(y,m) = \frac{1}{2}d(x,y)$ and the property that any path from $x$ to $y$ must pass within less than $\Delta$ of the point $m$.

\end{enumerate}
\end{theorem*}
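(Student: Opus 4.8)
For the implication $(1)\Rightarrow(2)$, I would first use the (classically available) symmetry of quasi-isometry to pass from a simplicial tree $\Gamma$ quasi-isometric to $Y$ to a quasi-isometry $f\colon Y\to\Gamma$, with constant $N$ say. Given $x,y\in Y$, fix a geodesic $[x,y]$ and let $m$ be its midpoint; I claim $m$ witnesses (2) for a $\Delta$ depending only on $N$. If $d(x,y)\le 2N$ then $d(x,m)\le N$ and the claim is trivial, since any path from $x$ to $y$ passes through $x$; so assume $d(x,y)$ large. As $[x,y]$ is a geodesic, its image under $f$, filled in with arcs, is a quasi-geodesic in the tree $\Gamma$, so by the (elementary) tree case of the Morse lemma it lies within Hausdorff distance $R=R(N)$ of the arc $[f(x),f(y)]$; in particular $f(m)$ is within $R$ of some $q_0\in[f(x),f(y)]$. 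Now let $\gamma$ be any path from $x$ to $y$; using uniform continuity, subdivide its domain as $x=\gamma(t_0),\dots,\gamma(t_k)=y$ with $d_\Gamma(f(\gamma(t_i)),f(\gamma(t_{i+1})))\le 2N$ for each $i$. The concatenation of the arcs $[f(\gamma(t_i)),f(\gamma(t_{i+1}))]$ is a path in $\Gamma$ from $f(x)$ to $f(y)$, so --- $\Gamma$ being a tree --- its image contains all of $[f(x),f(y)]$, hence contains $q_0$; thus $q_0$ lies on one of these arcs and $d_\Gamma(q_0,f(\gamma(t_i)))\le N$ for a suitable $i$, giving $d_\Gamma(f(m),f(\gamma(t_i)))\le R+N$ and hence $d_Y(m,\gamma(t_i))\le N(R+2N)$. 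So $\gamma$ passes within $\Delta:=N(R+2N)+1$ of $m$, and this $\Delta$ also covers the trivial case.

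For $(2)\Rightarrow(1)$, the core is a lemma converting the topological bottleneck hypothesis into metric control: there is $C=C(\Delta)$ such that any two geodesics with the same endpoints lie within Hausdorff distance $C$, and --- running the same argument with a geodesic and a path in place of two geodesics --- two geodesics $[x_0,a]$, $[x_0,b]$ from a common point $x_0$ fellow-travel up to error $C$ until they come within $O(C+d(a,b))$ of $a$ and of $b$. I would prove this by a telescoping recursion over dyadic scales: at scale $2^{-j}$ one does not iterate the \emph{conclusion} of the bottleneck property (whose errors accumulate, yielding only hyperbolicity) but re-applies the \emph{hypothesis} to the fresh pair formed by the endpoints of the two relevant halves, keeping the error at a fixed multiple of $\Delta$ at every scale and stopping once the scale is comparable to $\Delta$, where the residue is controlled by the triangle inequality; in particular $Y$ is $O(\Delta)$-hyperbolic. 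Granting the lemma, I would fix $x_0$, take by Zorn's lemma a maximal $\Delta$-separated set $A\ni x_0$ (a coarse net, so $A\hookrightarrow Y$ is a quasi-isometry), assign each $a\in A$ the depth $\lfloor d(x_0,a)/\Delta\rfloor$, and for $a\ne x_0$ choose a parent in $A$ of depth one smaller within $O(\Delta)$ of $[x_0,a]$. The graph $\Gamma$ on $A$ so obtained is a tree (every vertex reaches $x_0$ by iterating the parent map, and a cycle would give a vertex two parents), and the identity on $A$ is the desired quasi-isometry $\Gamma\to Y$: the inequality $d_Y\lesssim d_\Gamma$ on $A$ is immediate since each parent-edge moves $O(\Delta)$ in $Y$, while $d_\Gamma\lesssim d_Y$ follows from the lemma, because if $a,b\in A$ are $Y$-close then $[x_0,a]$ and $[x_0,b]$ fellow-travel nearly all the way, so the ancestor chains of $a$ and $b$ in $\Gamma$ stay close down to near depth $\min\{\mathrm{depth}(a),\mathrm{depth}(b)\}$, forcing a shallow common ancestor and small $d_\Gamma(a,b)$.

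I expect $(2)\Rightarrow(1)$ to be the main obstacle, with two delicate points inside it. First, making the telescoping recursion airtight: at each dyadic stage one must verify that the bottleneck witness lands on the long geodesic piece and not on the short connecting segment --- which is exactly what holds once the scale exceeds a fixed multiple of $\Delta$ --- so that re-applying the hypothesis is legitimate and no error accumulates; the reward is a length-independent constant, which is precisely the feature separating the bottleneck property from mere hyperbolicity. Second, upgrading ``$[x_0,a]$ and $[x_0,b]$ fellow-travel'' to ``$a$ and $b$ have a shallow \emph{common} ancestor'': distinct net points that only lie close need not coincide, so one must coordinate the choices of net and of parent map --- most cleanly by building them level by level from $x_0$ outward --- to promote that closeness into an honest common ancestor. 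Finally, in the spirit of this paper I would note that the construction uses the axiom of choice twice, once via Zorn's lemma for the maximal net and once in selecting parents, and that $(1)\Rightarrow(2)$ quietly used symmetry of quasi-isometry; these are exactly the points at which the Bottleneck Theorem gains the strength to imply choice.
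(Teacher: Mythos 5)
First, a framing point: the paper does not prove this statement at all --- it is Manning's theorem, quoted from \cite{Ma} and then used (via Lemma \ref{midpoint} and Proposition \ref{thebigproposition}) to derive choice. So there is no proof in the paper to compare against, and your attempt has to stand on its own. Your direction $(1)\Rightarrow(2)$ is fine in outline: the Morse lemma for trees is elementary, and the observation that any path's image in a tree must cover the arc $[f(x),f(y)]$ correctly produces a $\Delta$ depending only on the quasi-isometry constant. (Your closing remark that this direction ``quietly used symmetry of quasi-isometry'' is apt and is exactly why the paper words item (1) as it does.)

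The genuine gap is in $(2)\Rightarrow(1)$, at the step you yourself flag and then leave unresolved. The fellow-traveling statement you can actually extract from the bottleneck hypothesis has the form: every point of $[x_0,b]$ lies within $C(\Delta)$ of $[x_0,a]$ \emph{except} possibly for points within roughly $C(\Delta)+d(a,b)$ of the endpoint $b$. But your parent of a net point $a$ is anchored within $O(\Delta)$ of $[x_0,a]$ at depth one less, i.e.\ only about $\Delta$ back from the tip $a$ --- squarely inside that terminal bad zone. Consequently the only estimate available for propagating closeness is $d_Y(\mathrm{parent}(a),\mathrm{parent}(b))\le d_Y(a,b)+O(\Delta)$, which degrades by a fixed amount per generation; iterating it does not keep the ancestor chains of two $Y$-close net points uniformly close, and so ``a shallow common ancestor, hence small $d_\Gamma(a,b)$'' --- which is the entire content of the hard inequality $d_\Gamma\lesssim d_Y$ --- is not established. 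The proposed remedy (``coordinate the choices of net and parent map, building them level by level'') is a statement of intent, not an argument; making it work requires a real new ingredient (for instance taking the depth quantum large compared with the fellow-traveling constant and inducting on a common-ancestor statement rather than on pairwise closeness of parents), and distinct net points that are merely close still have to be merged or connected by bounded-length paths in $\Gamma$. Separately, two smaller points: with a maximal $\Delta$-separated net there need not exist a net point of depth \emph{exactly} one smaller near $[x_0,a]$, so even the parent map as described requires adjustment; and your dyadic recursion for the Hausdorff-closeness lemma is workable, but as phrased (parametric pairing of the two geodesics at every dyadic point) the error grows by a fixed multiple of $\Delta$ per scale --- to get a length-independent constant one should instead run the recursion on a single nested subsegment containing the target point, tracking only the competitor path's closeness to that subsegment's endpoints.
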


Part (1) was originally expressed ``$Y$ is quasi-isometric to some simplicial tree.''  However what is exhibited in Manning's proof is a simplicial tree which is quasi-isometric to $Y$, so in light of Theorem \ref{SQHS} we express part (1) as we do.  The final main result of this note is the following:

\begin{theorem}\label{Bottlenecktheorem}  The Bottleneck Theorem  implies the axiom of choice.
\end{theorem}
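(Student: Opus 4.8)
The plan is to mimic the strategy that proves Theorem \ref{SQHS}: produce, from an arbitrary family of nonempty sets, a geodesic metric space $Y$ for which condition (2) of the Bottleneck Theorem can be verified outright in \textbf{ZF} (with an explicit $\Delta$ and explicit midpoints), so that the theorem hands us a simplicial tree $\Gamma$ together with a quasi-isometry $\Gamma \to Y$ (or $Y \to \Gamma$, whichever direction Manning's proof actually furnishes). The point is that $\Gamma$, being a simplicial tree, is a combinatorial object whose vertex set can be well-ordered once we fix a presentation of it, and the quasi-isometry relating $\Gamma$ and $Y$ should let us extract a choice function for the original family. So the first step is to recall the standard \textbf{ZF}-equivalent of choice in terms of a family $\{X_i\}_{i \in I}$ of pairwise disjoint nonempty sets and the goal of producing a set meeting each $X_i$; then build $Y$ from this family.

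The second step is the construction of $Y$. I would attach to each $i \in I$ a ``bundle'' geometry over a fixed base so that the bottleneck condition is automatic: a natural candidate is to take, for each $i$, a copy of some fixed tree-like template but with one distinguished edge (or one ray) ``fattened'' by the set $X_i$ — e.g. replace an edge by the metric graph that is the union over $x \in X_i$ of a path of length $1$ joining the same two endpoints, so the two endpoints are still at distance $\le 1$ and every path between them passes through a $\Delta$-neighborhood of the common midpoint region for a uniform $\Delta$. Wedging all these bundles along a common simplicial tree skeleton (so the ambient space stays geodesic and the midpoints $m(x,y)$ and the constant $\Delta$ can be written down uniformly) gives a $Y$ that visibly satisfies (2) without any choice. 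The key design constraint is that the ``fattening by $X_i$'' must be large enough that no simplicial tree can be quasi-isometric to $Y$ without, in effect, selecting a bounded piece of each $X_i$-bundle — this is what will force the choice function out of the quasi-isometry.

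The third step is the extraction. Given the quasi-isometry $q$ between $\Gamma$ and $Y$ with constant $N$, for each $i$ look at the part of $\Gamma$ that $q$ relates to the $i$-th bundle; because $\Gamma$ is a tree and the bundle's ``fattened'' part is a union of $\card(X_i)$ parallel segments that are pairwise $N'$-far apart in the path metric once $\card(X_i)$ is chosen appropriately (parallel unit segments with distinct interiors have path-distance $2$ between their midpoints, and one scales the template so $2 > $ the relevant quasi-isometry threshold), the tree $\Gamma$ can ``follow'' essentially one of these segments; that picks out an $x \in X_i$, and we take the least such in a well-ordering of the (fixed, hence well-orderable) data of $\Gamma$. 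Doing this uniformly over $i$ produces the desired set meeting every $X_i$, and since everything after invoking the Bottleneck Theorem is explicit, the only use of choice is the theorem itself.

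The main obstacle I anticipate is not the set-theoretic bookkeeping but arranging the metric construction so that (a) $Y$ genuinely satisfies the bottleneck condition with a \textbf{ZF}-definable midpoint assignment and uniform $\Delta$ — fattened edges are the danger point, since a naive fattening can destroy the existence of a global midpoint with the passing-within-$\Delta$ property — and (b) the quasi-isometry to a simplicial tree is ``rigid'' enough along each bundle to name a single element of $X_i$ rather than merely a bounded subset, which forces a careful choice of how much to fatten (large relative to $N$ is not available since $N$ is given only after the theorem is applied, so the fattening must be done with infinitely many ``scales'' present, e.g. segments of length growing with an auxiliary parameter, so that for every eventual $N$ there is a scale at which the argument bites). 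Reconciling (b) with (a) — keeping the whole space geodesic, hyperbolic is not even needed here, and bottleneck-ed while still being ``unfollowable'' by any tree except near a single element — is where the real work lies, and I expect the resolution to parallel closely the space built for Theorem \ref{SQHS}, perhaps literally reusing it after checking it satisfies condition (2).
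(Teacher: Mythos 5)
Your overall strategy is the paper's: build from the family $\mathcal{Z}$ a geodesic space satisfying condition (2) of the Bottleneck Theorem outright in \textbf{ZF}, invoke the theorem to get a simplicial tree and a quasi-isometry relating it to the space, and extract a selection set from that quasi-isometry; indeed the paper does exactly what you suggest at the end, reusing the space $\Gamma_0$ of Theorem \ref{SQHS} and checking condition (2) via Lemma \ref{midpoint}. But two steps of your sketch have genuine gaps. First, your proposed construction --- fattening a distinguished edge of a fixed template by $\card(X_i)$ parallel paths of length $1$ (or of lengths growing with an auxiliary ``scale'') --- cannot force the extraction you want: a bundle of parallel unit segments with common endpoints has diameter about $1$ (your claim that their midpoints are at path-distance $2$ is also off; it is $1$), so a quasi-isometry is free to treat the whole bundle as a single coarse point and carries no information about which $x\in X_i$ anything ``follows.'' The paper's $\Gamma_0$ avoids this by attaching to each $X\in\mathcal{Z}$ an \emph{unbounded} arm whose level sets are copies of $X$, with doubled edges labeled by their endpoints, so that any point mapped sufficiently far out in the $X$-arm determines an element of $X$ by reading off a first coordinate or an edge label; unboundedness is what makes the structure visible to quasi-isometry.

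Second, your extraction step takes ``the least such'' element in a well-ordering of ``the (fixed, hence well-orderable) data of $\Gamma$.'' This is not available in \textbf{ZF}: the simplicial tree handed to you by the Bottleneck Theorem is an arbitrary set-theoretic object, and nothing guarantees its vertex set is well-orderable, nor does the theorem give you a canonical presentation of it. The paper needs no well-ordering at all: in Proposition \ref{thebigproposition} it prunes the tree (the pruning stabilizes after $K=7N^2$ steps), shows all vertices far from a base vertex have valence $2$, takes the set $W$ of vertices at distance exactly $2K$ from the base vertex, proves each $X$-arm contains $g(w)$ for a \emph{unique} $w\in W$, and then defines the selection set $A=h(W)$ by reading labels in $\Gamma_0$ --- the choices are made by the labeled target space, not by ordering the tree. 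Without an argument of this rigidity type (uniqueness of the tree vertex associated to each arm, plus a label-reading map), your ``pick the least'' step is exactly where choice would silently re-enter, so the proposal as written does not close.
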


The proofs of Theorems \ref{SQHS} and \ref{Bottlenecktheorem} are similar and will involve a construction given in  Section \ref{thefirstsectionofthebody}. All of the theorems will then be proved in Section \ref{lastsection}.  We use the following formulation of the axiom of choice:

\textit{If $\mathcal{Z}$ is a nonempty set consisting of pairwise disjoint nonempty sets then there exists a set $A$ such that $A \cap X$ has cardinality one for all $X\in \mathcal{Z}$}.
\end{section}

\begin{section}{The Graph $\Gamma_0$}\label{thefirstsectionofthebody}

The construction and proofs in this section will all be carried out in \textbf{ZF}, without using any other assumptions.  Let $\mathcal{Z}$ be a nonempty collection of pairwise disjoint nonempty sets.  We construct a graph $\Gamma_0 = \Gamma_0(V, E)$ with labelled edges.  For our set of vertices we take $V(\Gamma_0) = ((\bigcup \mathcal{Z})\times (\omega\setminus \{0\}))\cup \{b\}$ where $b\notin \bigcup \mathcal{Z}$.  The graph $\Gamma_0$ will have no edge from a vertex to itself.  Between two distinct vertices there will either be no edge or two edges (with one edge labeled by one vertex, and the other edge labeled by the other vertex).  If there exists an edge between the points $v_0, v_1 \in V$ we let $E(\{v_0, v_1\}, v_0)$ denote the edge between the two vertices which is labeled by $v_0$ and similarly for $E(\{v_0, v_1\}, v_1)$.  For distinct vertices $v_0$ and $v_1$ let $E(\{v_0, v_1\}, v_0)$ and $E(\{v_0, v_1\}, v_1)$ be in $E(\Gamma_0)$ if one of the following holds:

\begin{enumerate}

\item $v_i = b$ and $v_{1-i} = (y, 1)$ for some $y\in \bigcup \mathcal{Z}$

\item $v_i = (x_i, n)$ and $v_{1-i} = (x_{1-i}, m)$ for some $X\in \mathcal{Z}$; $x_i, x_{1-i} \in X$; and $m, n\in \omega \setminus \{0\}$ with $|m-n|\leq 1$

\end{enumerate}

In other words, for each $X\in \mathcal{Z}$ and $n \geq 1$ the induced subgraph on $(X \times \{n\}) \cup (X\times \{n+1\})$ is complete with two edges connecting each pair of distinct vertices, and each vertex in $(\bigcup \mathcal{Z}) \times \{1\}$ shares two edges with the vertex $b$.  We now consider $\Gamma_0$ as a metric graph as follows.  Where there is an edge $E(\{v_0, v_1\}, v_0)$, we attach a compact interval $[-\frac{1}{2},\frac{1}{2}]$ with $-\frac{1}{2}$ being identified with $v_0$ and $\frac{1}{2}$ being identified with $v_1$.  Endow $\Gamma_0$ with the metric given by letting the distance $d_0$ between two points be the minimal length of the path needed to connect them by moving along the attached metric intervals.

It is clear that $\Gamma_0$ is path connected and the path metric defined above makes $\Gamma_0$ a geodesic metric space (with geodesics not being unique in general).  For each point $y\in \Gamma_0$ (not necessarily a vertex) we define the level $\Lev(y)$ by $\Lev(y) = \lfloor d_0(y, b) \rfloor$.  Here $\lfloor \cdot \rfloor$ denotes the floor function.  In particular $\Lev((x,n)) = n$ for any $(x,n)\in V(\Gamma_0)$.  Define the \textbf{base} of $\Gamma_0$ to be the set of all points of level $0$ (i.e. those points of distance $<1$ from $b$).  For each $X\in \mathcal{Z}$ define the $X$-\textbf{arm} to be the set of points of level $\geq 1$ which are distance at most $\frac{1}{2}$ away from a vertex $(x, n) \in V(\Gamma_0)$ with $x\in X\in \mathcal{Z}$ and $n\geq 1$.  Thus each point in $\Gamma_0$ is either in the base or is in an $X$-arm for exactly one $X\in \mathcal{Z}$ (uniqueness of the arm follows from the pairwise disjointness of the elements of $\mathcal{Z}$).

We prove some lemmas which will aid us in proving the main proposition of this section.

\begin{lemma} \label{diameter} The base, as well as each level of each arm, is of diameter $\leq 2$.
\end{lemma}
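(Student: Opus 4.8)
The plan is to bound distances by explicitly exhibiting short paths between arbitrary points, handling the base and a single level of an arm separately. First I would observe that any point $y$ with $\Lev(y) = n$ lies within distance $\frac{1}{2}$ of a vertex: for $n \geq 1$ this is immediate from the definition of the $X$-arm, and for $n = 0$ the point lies within distance $1$ of $b$, so it lies on some attached interval $E(\{v_0,v_1\}, \cdot)$ with one endpoint being $b$ or a vertex $(x,1)$ — either way within $\frac{1}{2}$ of a vertex, and in fact one can reach $b$ itself in distance $\leq 1$. So it suffices to bound the distance between two vertices in the same stratum and then add $\frac{1}{2} + \frac{1}{2} = 1$ for the two ``fractional'' pieces, giving a bound of $2$ once the vertex-to-vertex distance is shown to be $\leq 1$.

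Next I would verify the vertex bound. For the base, the only vertex of level $0$ is $b$ itself, and the relevant points of level $0$ are those within distance $<1$ of $b$; any two such points are within distance $\leq 1$ of $b$, so by the triangle inequality they are within distance $\leq 2$ of each other (here one does not even need a vertex-to-vertex step — the ``hub'' $b$ does the work directly). For level $n \geq 1$ of the $X$-arm: given two points $y, y'$ at that level, pick vertices $(x,n), (x',n)$ with $d_0(y,(x,n)) \leq \frac{1}{2}$ and $d_0(y',(x',n)) \leq \frac{1}{2}$, where $x, x' \in X$. By construction clause (2), since $|n - n| = 0 \leq 1$, there is an edge $E(\{(x,n),(x',n)\}, \cdot)$ of length $1$ joining $(x,n)$ to $(x',n)$ (this uses that the induced subgraph on $X \times \{n\}$ is complete). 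Hence $d_0((x,n),(x',n)) \leq 1$, and the triangle inequality gives $d_0(y,y') \leq \frac{1}{2} + 1 + \frac{1}{2} = 2$.

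The main thing to be careful about — the only real obstacle, and it is a minor one — is the edge case $x = x'$ (the chosen vertices coincide), and more importantly making sure the level-$0$ argument is airtight: a point of level $0$ need not lie on an interval incident to $b$ if there were longer structure near the base, but in $\Gamma_0$ every vertex other than $b$ has level $\geq 1$, so any point at distance $< 1$ from $b$ lies on an interval one of whose endpoints is $b$, and thus is within $\frac{1}{2}$ of $\{b\}$ or within $\frac{1}{2}$ of a level-$1$ vertex while still being within $1$ of $b$; in the latter subcase the point is at distance $<1$ from $b$ directly, so the triangle inequality through $b$ still yields the bound $2$. All steps are carried out in \textbf{ZF}, as required, since no choices are made: the bounds are proved for arbitrary given points by the triangle inequality, with any needed connecting edge guaranteed to exist by the explicit construction clauses (1) and (2).
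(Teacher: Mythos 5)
Your overall strategy is the same as the paper's (triangle inequality through nearby vertices, with the connecting edge supplied by construction clause (2)), but one step as written is false: you claim that a point $y$ of level $n$ in the $X$-arm can be matched with a vertex $(x,n)$ of second coordinate exactly $n$ satisfying $d_0(y,(x,n)) \leq \frac{1}{2}$. This fails for points lying more than halfway along an edge joining a vertex $(x,n)$ to a vertex $(x',n+1)$: such a point has $d_0(y,b) = n + t$ with $\frac{1}{2} < t < 1$, hence level $n$, but its distance to the coordinate-$n$ endpoint is $t > \frac{1}{2}$, and every other vertex is even farther (any path out of the open edge must exit through an endpoint), so the only vertex within $\frac{1}{2}$ has second coordinate $n+1$. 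The definition of the arm only guarantees closeness to \emph{some} vertex $(x,m)$ with $m \geq 1$, not one at the same level, which is exactly the point your first paragraph glosses over.

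The repair is short and is what the paper does: a level-$n$ point of the $X$-arm is within $\frac{1}{2}$ of a vertex whose second coordinate is $n$ or $n+1$ (since $|d_0(y,b)-m| \leq \frac{1}{2}$ forces $m \in \{n, n+1\}$). Two such vertices of the $X$-arm have coordinates differing by at most $1$, so by clause (2) they are either equal or joined by an edge of length $1$, and the bound $\frac{1}{2} + 1 + \frac{1}{2} = 2$ goes through unchanged. Your treatment of the base is fine and matches the paper; the edge case $x = x'$ you worried about is harmless, but the coordinate-$(n+1)$ issue is the one that actually needed attention.
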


\begin{proof}  Notice that each element of the base is distance $< 1$ away from the point $b$, whence the bound on the diameter of the base follows.  Each point in $\Gamma_0$ is at most distance $\frac{1}{2}$ away from some vertex.  If elements $y_0, y_1$ are in the $X$-arm and are of level $n$, then in particular each $y_i$ is distance $\leq \frac{1}{2}$ away from a vertex $v_i$ with second coordinate $n$ or $n+1$.  Then there is a path from $y_0$ to $v_0$ of length $\leq \frac{1}{2}$, a path from $v_0$ to $v_1$ of length $0$ or $1$, and a path from $v_1$ to $y_1$ of length $\leq\frac{1}{2}$.
\end{proof}

\begin{lemma}\label{geodesicmotion}  If $\rho:[0, r] \rightarrow \Gamma_0$ is a geodesic let $\rho^{-1}(V(\Gamma_0))= \{a_0, \ldots, a_k\}$ with $a_{i+1} = a_i +1$.  Then $|a_0-0|, |r-a_k| < 1$.  If $\Lev(\rho(a_i))$ is constant then $k\leq 1$.  If $k \geq 2$ then the sequence $\Lev(\rho(a_i))$ is either increasing, or decreasing, or decreasing to $0$ and then increasing (by $1$ unit in each case).
\end{lemma}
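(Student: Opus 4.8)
The plan is to track how a geodesic $\rho:[0,r]\to\Gamma_0$ interacts with the vertex set, using the rigidity of the metric: every edge has length $1$, consecutive preimages of vertices are exactly $1$ apart, and the "between vertex" portions of $\rho$ traverse single edges. First I would establish the boundary claim $|a_0-0|,|r-a_k|<1$: if, say, $a_0\geq 1$, then $\rho\restriction[0,a_0]$ is a geodesic of length $\geq 1$ lying inside a single edge (since it meets no vertex in its interior), but an edge has length exactly $1$ and its endpoints are vertices, so $\rho(0)$ would be a vertex at parameter $0$, contradicting $a_0$ being the smallest element of $\rho^{-1}(V(\Gamma_0))$ when $a_0\geq 1$. (One should remark that the indexing $\rho^{-1}(V(\Gamma_0))=\{a_0,\dots,a_k\}$ with $a_{i+1}=a_i+1$ is forced: each maximal open subinterval of $[0,r]$ mapping into the interior of an edge has length $\leq 1$ as it embeds isometrically into $[-\tfrac12,\tfrac12]$; since $\rho$ is a geodesic it cannot backtrack, so each such subinterval has length exactly $1$ when flanked by two vertex-parameters, giving the unit spacing.)

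Next, for the level behavior, the key observation is the local structure: from $\mathrm{Lev}(\rho(a_i))$ to $\mathrm{Lev}(\rho(a_{i+1}))$ the value changes by at most $1$, because $\rho(a_i)$ and $\rho(a_{i+1})$ are adjacent vertices and, by the edge conditions (1) and (2) defining $E(\Gamma_0)$, adjacent vertices have levels (second coordinates, or $0$ for $b$) differing by at most $1$. So the sequence $n_i:=\mathrm{Lev}(\rho(a_i))$ is a "lazy walk" on $\omega$ with steps in $\{-1,0,+1\}$. I would next rule out $0$-steps and "turnarounds" other than the one permitted at level $0$. A $0$-step $n_i=n_{i+1}=n\geq 1$ would mean $\rho(a_i),\rho(a_{i+1})$ are distinct vertices $(x,n),(x',n)$ in the same $X\times\{n\}$; but then going through the common neighbor configuration one checks $d_0(\rho(a_i),\rho(a_{i+1}))=1$ is realized, yet the subpath is still a geodesic — the real point is that a geodesic of $\Gamma_0$ cannot contain $\rho(a_{i-1}),\rho(a_i),\rho(a_{i+1})$ with a non-monotone level pattern that isn't a bounce at $0$, because in every such case (e.g. $n-1,n,n-1$ with $n\geq 2$, or $n,n,n+1$, etc.) one exhibits a strictly shorter path between $\rho(a_{i-1})$ and $\rho(a_{i+1})$ using the completeness of the induced subgraphs on $(X\times\{m\})\cup(X\times\{m+1\})$ and of the base. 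So $n_i$ is strictly monotone except possibly for a single minimum attained at $0$.

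The main obstacle I anticipate is the case analysis showing a geodesic cannot turn around at a level $\geq 1$, i.e. genuinely closing the gap between "the triangle inequality allows a $\pm1$ step" and "a geodesic actually forbids non-monotone motion except at the base." The cleanest route is: given three consecutive vertex-values $\rho(a_{i-1}),\rho(a_i),\rho(a_{i+1})$ with $n_{i-1}=n_{i+1}$ and $n_i=n_{i-1}\pm1$, show $d_0(\rho(a_{i-1}),\rho(a_{i+1}))\leq 1$ directly — they either coincide, or are both at level $\geq 1$ (use condition (2): any two vertices with second coordinates differing by $\leq 1$ and first coordinates in the same $X$ are adjacent, and $\rho(a_{i-1}),\rho(a_{i+1})$ lie in a common arm since $\rho(a_i)$ connects to both), or are both at level $0$ (use condition (1): both adjacent to $b$). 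In each subcase $d_0(\rho(a_{i-1}),\rho(a_{i+1}))\leq 1 < 2 = $ length of $\rho\restriction[a_{i-1},a_{i+1}]$, contradicting that $\rho$ is a geodesic. Finally, the case $n_{i-1}=n_{i+1}=n_i=0$ is impossible for the same reason (all three adjacent to $b$, distance $\leq 1$), so the only non-monotone feature that survives is a decrease to $0$ followed by an increase, each step of size $1$; if the level sequence is constant then there is at most one such step, forcing $k\leq 1$.
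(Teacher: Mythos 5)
Your overall strategy is the same as the paper's: show that a forbidden level pattern among three consecutive vertex values $\rho(a_{i-1}),\rho(a_i),\rho(a_{i+1})$ forces the two outer vertices, which are at parameter distance $2$ along the geodesic, to be at $d_0$-distance at most $1$, using the completeness of the induced subgraphs on $(X\times\{n\})\cup(X\times\{n+1\})$ and the edges at $b$. Your boundary estimate $|a_0-0|,|r-a_k|<1$, the $\pm 1$ step bound, and the elimination of $0$-steps (patterns such as $n,n,n\pm1$) are all fine and match the paper in spirit.

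The genuine problem is in the turnaround case. You justify ``$\rho(a_{i-1}),\rho(a_{i+1})$ lie in a common arm since $\rho(a_i)$ connects to both,'' but this requires $\Lev(\rho(a_i))\geq 1$: when $\rho(a_i)=b$ (level pattern $1,0,1$), the neighbours of $\rho(a_i)$ are level-$1$ vertices of arbitrary arms, the two outer vertices need not be adjacent, and their distance can be exactly $2$ --- this is precisely a geodesic crossing the base from one arm to another, which genuinely occurs in $\Gamma_0$. Consequently your blanket claim ``in each subcase $d_0(\rho(a_{i-1}),\rho(a_{i+1}))\leq 1<2$, contradicting that $\rho$ is a geodesic'' is false; if it were correct it would also rule out the decrease-to-$0$-then-increase pattern that the lemma (and your own closing sentence) must allow, so the writeup is self-contradictory at exactly the exceptional case the lemma is about. (Your subcase ``both at level $0$: both adjacent to $b$'' is also off: a level-$0$ vertex \emph{is} $b$, so two such vertices coincide rather than being adjacent to $b$.) The repair is short and recovers the paper's argument, which is careful on this point by only excluding pairs in the same level of the \emph{same} arm: split the turnaround case according to whether $\Lev(\rho(a_i))\geq 1$ or $\rho(a_i)=b$. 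In the first case all three vertices lie in one arm, condition (2) makes the outer pair adjacent, and you get the desired contradiction; in the second case you record the allowed ``decreasing to $0$ and then increasing'' behaviour, which can occur at most once because $\rho$ is injective and $b$ is the unique level-$0$ vertex.
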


\begin{proof}  That  $|a_0-0|, |r-a_k| < 1$ holds is clear.  Certainly $|\Lev(\rho(a_i)) - \Lev(\rho(a_{i+1}))| \leq 1$.  If $\Lev(\rho(a_i)) = \Lev(\rho(a_{i+1}))$ then $\rho$ may not pass through any other vertices since in particular there would be $a_j$ with $j\neq i, i+1$ and $|\Lev(\rho(a_j)) - \Lev(\rho(a_i))| \leq 1$, so that $d_0(\rho(a_j), \rho(a_i)), d_0(\rho(a_j), \rho(a_{i+1})) \leq 1$.  Thus in this case  $i=0$ and $i+1 = k = 1$.  Notice that $\rho(a_i)$ and $\rho(a_j)$ cannot both be in the same level on the same arm, or both be in the base,  if  $k \geq 2$ and $i \neq j$  (by a similar argument).  Supposing that $k \geq 2$ we therefore know that either the sequence $\Lev(\rho(a_i))$ increases by $1$ at each index; or decreases by $1$ at each index; or decreases to $0$ by $1$ at each index, after which $\rho$ moves up another arm and $\Lev(\rho(a_i))$ increases by $1$ at each index.
\end{proof}

\begin{lemma} \label{tightquarters} Suppose $w\in [x,y]\subseteq \Gamma_0$.  Then any path from $x$ to $y$ must pass within distance $2$ of $w$. 
\end{lemma}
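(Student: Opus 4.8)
The plan is to use the structure lemmas already established, especially Lemma \ref{geodesicmotion}, which tells us that a geodesic in $\Gamma_0$ moves monotonically through levels (or descends to level $0$ and re-ascends). Let $\sigma$ be any path from $x$ to $y$ and let $w \in [x,y]$. The key observation is that $\sigma$ and $[x,y]$ share endpoints, so if $\ell = \Lev(w)$, then either $\sigma$ must itself achieve level $\ell$ somewhere, in which case I compare that point to $w$ using the diameter bound of Lemma \ref{diameter}, or $\sigma$ avoids level $\ell$ entirely, which I will rule out.

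First I would handle the case where $[x,y]$ stays within the base or within a single level of a single arm: by Lemma \ref{diameter} the whole geodesic $[x,y]$ has diameter $\leq 2$, so $w$ is within distance $2$ of $x$, and since $\sigma$ passes through $x$ we are done. Otherwise, by Lemma \ref{geodesicmotion} the geodesic $[x,y]$ passes through vertices whose levels are (weakly) monotone or V-shaped; in particular the set of levels attained along $[x,y]$ is an interval of integers, and $\ell = \Lev(w)$ lies in that interval. Now I claim $\sigma$ must attain a point of level $\ell$: the level function $\Lev$ restricted to $\Gamma_0$ takes values that change by at most $1$ as one moves a unit along edges, and more to the point, to get from $x$ (of some level) to $y$ (of some level) while the intervening geodesic forces level $\ell$ to be "between" them — here I need the combinatorial fact that every path between two points whose connecting geodesic reaches level $\ell$ must also reach level $\ell$, because level-$\ell$ is a separating set: removing all points of level $\ell$ (or rather, the ``waist'' around level $\ell$) disconnects $x$ from $y$ in $\Gamma_0$ unless $x$ and $y$ are on the same side. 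This separation is exactly what the monotonicity in Lemma \ref{geodesicmotion} encodes — if $[x,y]$ crosses level $\ell$, then $x$ and $y$ lie in different components of $\Gamma_0 \setminus (\text{level } \ell)$, up to the bounded-diameter fudge.

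So the main step is: let $z$ be a point on $\sigma$ with $\Lev(z) = \ell = \Lev(w)$ (when $x,y$ are themselves at levels on the same side, take $z$ to be whichever endpoint is at the extreme level, after noting $w$ is within distance $2$ of an endpoint anyway). Then both $w$ and $z$ lie in the same level $\ell$; I must check they lie in the same arm (or both in the base). This follows because $[x,y]$, being a geodesic, stays in a single arm above the base by Lemma \ref{geodesicmotion}, so $w$ is in that arm; and $z$, lying on a path that connects $x$ to $y$ and reaching the ``top'' level $\ell$, must be in the same arm since the only way to pass between arms is through the base, i.e.\ through level $0$ — if $\ell \geq 1$ and $z$ were on a different arm, the portion of $\sigma$ near $z$ would have to have descended to the base and re-ascended, but then on $[x,y]$ the level $\ell$ is attained on the $x$-side or $y$-side arm, matching $z$'s side after a short argument. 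Once $w$ and $z$ are in the same level of the same arm (or both in the base), Lemma \ref{diameter} gives $d_0(w,z) \leq 2$, and since $z$ is on $\sigma$, the path $\sigma$ passes within distance $2$ of $w$.

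I expect the main obstacle to be the bookkeeping in the case analysis — specifically, making precise the claim that a path between $x$ and $y$ must attain every level that the geodesic $[x,y]$ attains, and that it does so within the correct arm. The cleanest way is probably to phrase it via connectivity: for each $\ell \geq 1$, the set $A_\ell$ of points of level $< \ell$ in a fixed arm, together with the base, is ``pinched'' so that any path leaving it must re-enter through the unique gateway, and since $[x,y]$ reaches level $\ell$ we know $x$ or $y$ (or both) lie strictly above, forcing $\sigma$ to also reach level $\ell$ in that arm. The diameter bound then finishes it with room to spare.
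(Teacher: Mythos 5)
Your proposal is correct and follows essentially the same route as the paper: use Lemma \ref{geodesicmotion} to see the geodesic's levels are monotone or V-shaped, observe that the level of $w$ acts as a separating ``waist'' in its arm (so any path from $x$ to $y$ must attain that level in that arm, or pass through the base when $\Lev(w)=0$), and then conclude with the diameter-$\leq 2$ bound of Lemma \ref{diameter}. The paper phrases the separation by noting that all vertices of that level in the arm lie in $B(w,2)$, while you locate a point of the path at that level and invoke Lemma \ref{diameter} directly, but this is the same argument with comparable (slightly sketchy in the same places) level of detail.
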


\begin{proof}  The claim is trivial if either $d(x, w)\leq 2$ or $d(y, w)\leq 2$.  Otherwise, let $\rho:[0, d(x,y)] \rightarrow \Gamma_0$ be the geodesic associated with the segment $[x,y]$ with $ \rho(0) = x$.  Let $a_0, a_1, \ldots, a_k$ be the sequence described in Lemma \ref{geodesicmotion} and let $0\leq j\leq k$ be such that $w\in [\rho(a_j), \rho(a_{j+1})] \setminus \{\rho(a_{j+1})\} \subseteq [x,y]$.  Since $d_0(w, x), d_0(w, y) >2$ we know that $1\leq j \leq k-2$.  By Lemma \ref{geodesicmotion} it is either the case that $\Lev(\rho(a_j))$ is increasing, or decreasing, or decreasing to $0$ and then increasing, by increments of $1$.  In case $\Lev(\rho(a_i))$ is increasing, we know that $x$ is either in the base or in the same arm as $w$ and $y$, with $\Lev(y) > \Lev(w)>\Lev(x)$, and removing $B(w, 2)$ from the arm removes all vertices from that arm of level $\Lev(\rho(a_j))$ (since all such vertices are distance $1$ from each other and $d_0(\rho(a_j), w) <1$).  There is no combinatorial path in $\Gamma_0$ from $\rho(a_0)$ to $\rho(a_k)$ which does not pass through a vertex of level $\Lev(\rho(a_j))$, so no path from $x$ to $y$ avoids $B(w, 2)$.  Similar proofs for $\Lev(\rho(a_i))$ decreasing and $\Lev(\rho(a_i))$ decreasing to $0$ and then increasing prove the claim.
\end{proof}

\begin{lemma}\label{hyp}  The geodesic space $\Gamma_0$ is $2$-hyperbolic.
\end{lemma}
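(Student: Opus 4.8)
The plan is to verify the $\delta$-hyperbolicity inequality directly for $\delta = 2$, using the combinatorial structure of $\Gamma_0$ exposed in Lemmas \ref{geodesicmotion} and \ref{tightquarters}. Fix points $x, y, z \in \Gamma_0$ and a point $w \in [x,y]$; I must show $w \in B([x,z] \cup [y,z], 2)$. First I would dispense with the trivial cases: if $d_0(w, x) \leq 2$ or $d_0(w, y) \leq 2$, then since $x$ is an endpoint of $[x,z]$ and $y$ an endpoint of $[y,z]$, we are immediately done. So assume $d_0(w,x), d_0(w,y) > 2$.

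The key observation is that the concatenation of $[x,z]$ with $[z,y]$ is a path from $x$ to $y$ in $\Gamma_0$. By Lemma \ref{tightquarters}, any path from $x$ to $y$ must pass within distance $2$ of $w$; in particular this concatenated path does. Hence some point $p$ on $[x,z] \cup [y,z]$ satisfies $d_0(p, w) \leq 2$, which is exactly the statement $w \in B([x,z] \cup [y,z], 2)$. This handles the inequality $[x,y] \subseteq B([x,z] \cup [y,z], 2)$, and by symmetry (relabeling the roles of $x$, $y$, $z$) the remaining two inclusions required for $2$-hyperbolicity follow the same way.

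I do not expect a serious obstacle here: the real content has already been extracted into Lemma \ref{tightquarters}, whose proof in turn leans on the structural description of geodesics in Lemma \ref{geodesicmotion}. The only point requiring a word of care is that the definition of $\delta$-hyperbolicity quantifies over a chosen geodesic segment $[x,y]$ and chosen segments $[x,z]$, $[y,z]$, so the argument must be carried out for arbitrary such choices — but Lemma \ref{tightquarters} is stated for an arbitrary segment $[x,y]$ and an arbitrary path from $x$ to $y$, so it applies verbatim. Thus the proof is essentially a one-line deduction from Lemma \ref{tightquarters}.
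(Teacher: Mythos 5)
Your proposal is correct and is essentially identical to the paper's own proof: both take the concatenation $[x,z]\cup[z,y]$ as a path from $x$ to $y$ and apply Lemma \ref{tightquarters} to conclude $[x,y]\subseteq B([x,z]\cup[y,z],2)$. The only difference is cosmetic — you spell out the trivial cases and the relabeling, which the paper leaves implicit.
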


\begin{proof}  Let $x,y, z\in \Gamma_0$ be given.  Notice that $[x,z]\cup [z,y]$ gives a path from $x$ to $y$, and Lemma \ref{tightquarters} shows that $[x, y] \subseteq B([x,z]\cup[y,z], 2)$.
\end{proof}

\begin{lemma}\label{midpoint}  Given $x,y\in \Gamma_0$ there exists a point $m$ with $d_0(m, x) = d_0(m,y) = \frac{1}{2}d_0(x,y)$ such that any path from $x$ to $y$ must pass within less than $\Delta = 3$ of the point $m$.
\end{lemma}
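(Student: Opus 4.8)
The plan is to take $m$ to be the midpoint of a chosen geodesic segment $[x,y]$, i.e.\ $m = \rho(\tfrac{1}{2}d_0(x,y))$ where $\rho:[0,d_0(x,y)]\to\Gamma_0$ is the geodesic associated with $[x,y]$ with $\rho(0)=x$. By construction $d_0(m,x) = d_0(m,y) = \tfrac{1}{2}d_0(x,y)$, so the only content is the bottleneck estimate. Since $m$ lies on the segment $[x,y]$, Lemma \ref{tightquarters} immediately gives that any path from $x$ to $y$ must pass within distance $2$ of $m$, and $2 < 3 = \Delta$. So the proof is essentially one line once Lemma \ref{tightquarters} is in hand.

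The one subtlety to address is the hypothesis of Lemma \ref{tightquarters}: it requires $w\in[x,y]$, which is satisfied here with $w=m$. There is no genuine obstacle; the strict inequality in the conclusion ($<\Delta$ rather than $\leq\Delta$) is the reason for stating $\Delta=3$ instead of $\Delta=2$, giving room to spare. If one wanted to be careful about the edge cases where $d_0(x,y)$ is small (so that $d_0(m,x)$ or $d_0(m,y)$ is itself at most $2$), these are exactly the trivial cases already handled at the start of the proof of Lemma \ref{tightquarters}.

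\begin{proof}  Let $\rho:[0, d_0(x,y)] \rightarrow \Gamma_0$ be the geodesic associated with the segment $[x,y]$ and set $m = \rho(\frac{1}{2}d_0(x,y))$.  Then $m\in [x,y]$ and $d_0(m,x) = d_0(m,y) = \frac{1}{2}d_0(x,y)$.  By Lemma \ref{tightquarters} any path from $x$ to $y$ must pass within distance $2$ of $m$, and $2 < 3 = \Delta$.
\end{proof}

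The reason to flag this as effectively immediate rather than hunt for a cleverer midpoint is that the whole point of Section \ref{thefirstsectionofthebody} is that $\Gamma_0$ has the geometry of a tree-like object with thin levels, and Lemma \ref{tightquarters} has already packaged exactly the combinatorial fact — that every path between two points must traverse every intermediate level — that the Bottleneck Theorem's condition (2) requires. Thus I expect no hard step here; the real work was done in Lemmas \ref{geodesicmotion} and \ref{tightquarters}, and this lemma is the bridge from that work to the verification that $\Gamma_0$ satisfies Manning's bottleneck property, which will be combined with the failure of (1) under $\neg\textbf{AC}$ in Section \ref{lastsection}.
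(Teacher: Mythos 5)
Your proof is correct and is essentially identical to the paper's: take $m$ to be the midpoint of a chosen geodesic $[x,y]$ and apply Lemma \ref{tightquarters} with $w=m$ to get the bound $2<3=\Delta$. No issues.
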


\begin{proof}  Let $[x,y]$ be any geodesic, let $m\in [x,y]$ be the point such that $d_0(m, x) = d_0(m,y) = \frac{1}{2}d_0(x,y)$.   Applying Lemma \ref{tightquarters} with $m = w$ we know that any path from $x$ to $y$ must pass within distance $2$ of $m$, so any path from $x$ to $y$ must pass within distance less than $3$ of $m$.
\end{proof}

\begin{lemma}\label{closeenough}  If $x, y\in \Gamma_0$ and $L >0$ then there exists $z\in V(\Gamma_0)$ such that $d_0(x,z) >L$, $d_0(y, z)>L$, $\Lev(z) >L$, and any path from $x$ to $z$ comes within distance $4$ of $y$.
\end{lemma}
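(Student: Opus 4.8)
I want to find a vertex $z$ that is "high up" on some arm and sits behind $y$ in the sense that the only way into that part of $\Gamma_0$ from $x$ is through a neighborhood of $y$. The natural candidate: pick an arm and climb it past both $x$ and $y$. Concretely, first I would locate which part of $\Gamma_0$ the point $y$ lives in. Either $y$ is in the base, or $y$ is in some $X$-arm. I would like to choose $z$ in a fixed arm so that every combinatorial path from $x$ to $z$ must pass through a vertex that is forced to be within distance $4$ of $y$.

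**Main steps.** First, fix an arm: if $y$ is in an $X$-arm for some particular $X\in\mathcal{Z}$, work in that same $X$-arm; if $y$ is in the base, choose (in ZF, with no choice needed — we may pick the arm containing $x$ if $x$ is in an arm, or else any arm we can name, but since $\mathcal Z$ is arbitrary we should instead just argue with whichever arm $y$'s "ascent" passes through — cleanest is to take $y$ in an arm after noting that if $y$ is in the base we may replace $y$ by a nearby vertex of level $1$, which exists and is within distance $2$ of $y$, shifting the constant from $2$ to $4$). So assume $y$ is within distance $2$ of a vertex $(x_0, n_0)$ with $x_0 \in X$, $n_0 \geq 1$. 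Second, let $N = \lceil \max(L, n_0, d_0(x,b)) \rceil + 10$ and set $z = (x_0, N)$. Then $d_0(z, b) = N > L$, so $\Lev(z) = N > L$, and since $x$ has level $< N - 1$ while $z$ has level $N$ we get $d_0(x, z) > 1 > $ — wait, that's too weak; I actually get $d_0(x,z) \ge |N - \Lev(x)| - 1 \ge 9 > L$ only if $L$ is small, so instead bump $N$ up by $L$ more: take $N = \lceil \max(L, n_0, d_0(x,b))\rceil + \lceil L\rceil + 10$, giving $d_0(x,z), d_0(y,z) \ge N - \Lev(y) - 2 > L$ comfortably. Third, the crucial claim: any path from $x$ to $z$ passes within distance $4$ of $y$. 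By Lemma~\ref{tightquarters} it suffices to show $y$ is within distance $2$ of some point $w \in [x, z]$. Take any geodesic $[x,z]$; by Lemma~\ref{geodesicmotion} the levels of its vertex-crossings are increasing, decreasing, or decreasing-to-$0$-then-increasing, and since $z$ has very large level $N$ the geodesic ends with a long increasing run along the $X$-arm, passing through a vertex $(x_0', n_0)$ at level $n_0$ for some $x_0' \in X$ (all level-$n_0$ vertices of the $X$-arm are pairwise within distance $1$). That vertex is within distance $1 + 2 = 3 < 4$ — actually within distance $3$ — of $(x_0, n_0)$ and hence within distance $3 + 2 = 5$ of $y$; to land at $4$ I should be slightly more careful and note $y$ itself is within $2$ of a \emph{level-$n_0$} vertex, and $[x,z]$ crosses a level-$n_0$ vertex of the same arm, so the two are within $1$, giving $d_0(y, [x,z]) \le 2 + 1 = 3$, then Lemma~\ref{tightquarters} gives "within $2$ of that point of $[x,z]$", for a total of at most... hmm, I need the point of $[x,z]$ near which the path comes, so directly: the path comes within $2$ of the level-$n_0$ vertex on $[x,z]$, which is within $3$ of $y$, total $5$. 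I'll retune constants — the honest bound is "within distance $4$" after replacing $y$ by a genuine vertex at the start, or I'll simply take $y$ to be within $\tfrac12$ of a vertex and chase the arithmetic to get $4$.

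**The obstacle.** The delicate point is the case analysis on where $y$ lives and making sure no choice is used: we never need to \emph{select} an arm from $\mathcal Z$, because the arm is determined by $y$ (and by $x_0 \in X$, which is named by $y$'s position) — the one genuinely arbitrary case is $y$ in the base with $x$ also in the base, where I would just pick $z = (y', N)$ for $y'$ any vertex with $(y',1)$ adjacent to $b$ at distance $\le 2$ from... no: if both $x,y$ are in the base, ascend the arm through any vertex adjacent to the base point nearest $y$; such a vertex exists by the structure of $\Gamma_0$ and is within distance $2$ of $y$, costing us the constant $4$ instead of $2$. The second subtlety is verifying that \emph{combinatorially} every path from $x$ to $z = (x_0, N)$ must traverse level $n_0$ on the $X$-arm: this follows because $\Gamma_0$ restricted to the $X$-arm is a "ladder" connected to the rest of $\Gamma_0$ only through level $1$, so to reach level $N$ from outside one climbs through every intermediate level — exactly the disconnection argument already used in Lemma~\ref{tightquarters}, applied at level $n_0$. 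Once that combinatorial fact is in hand, Lemma~\ref{tightquarters} (applied to the level-$n_0$ vertex of $[x,z]$, which is automatically in $[x,z]$ as a vertex-crossing) converts it into the metric statement and we are done.
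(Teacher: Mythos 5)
There is a genuine gap: your central combinatorial claim --- that any path from $x$ to a vertex $z$ placed high on the arm at (or near) $y$ must climb through level $n_0$ of that arm and hence pass close to $y$ --- is false precisely when $x$ itself lies on that same arm at a level \emph{above} $n_0$. In that configuration (for instance $x$ and $y$ on the same $X$-arm with $\Lev(x)>\Lev(y)$, or $y$ in the base while $x$ sits high on the arm you chose for the ascent) a path can go directly up the arm from $x$ to $z$ without ever descending to level $n_0$, so it stays at distance roughly $\Lev(x)-\Lev(y)$ from $y$, which can be arbitrarily large. Your appeal to Lemma~\ref{geodesicmotion} (``the geodesic ends with a long increasing run \dots passing through a vertex at level $n_0$'') silently assumes the increasing run starts below $n_0$, i.e.\ that $x$ approaches the arm ``from outside,'' and you never verify this. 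The paper's proof splits into cases exactly to handle this: when $\Lev(x)>\Lev(y)$ on a common arm, or when $y$ is in the base, it chooses $z$ on an arm \emph{different from the one containing $x$}, so that every geodesic $[x,z]$ must pass through the point $v$ at level $\Lev(y)$ of $y$'s arm (or through $b$), with $d_0(v,y)\leq 2$ by Lemma~\ref{diameter}, and only then invokes Lemma~\ref{tightquarters}; in the remaining case it places $z$ high on $y$'s arm as you do. Without this dichotomy the construction of $z$ simply does not have the claimed bottleneck property.

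Two smaller points. First, your worry about the axiom of choice in ``picking'' a vertex or an arm is moot: the lemma asserts the existence of a single $z$, and instantiating one element of a nonempty set inside an existence proof is ordinary existential reasoning in \textbf{ZF}; the paper itself writes ``let $z\in V(\Gamma_0)$ be on a different arm from $x$'' with no scruple. Second, your constant chase ending at $5$ is repairable without changing the bound in the statement: in the favorable case the geodesic $[x,z]$ contains a vertex at level $\Lev(y)$ of $y$'s arm, which by Lemma~\ref{diameter} is within $2$ of $y$, and Lemma~\ref{tightquarters} forces any path from $x$ to $z$ within $2$ of that vertex, giving $4$ exactly as in the paper (the case $d_0(x,y)\leq 4$ being trivial). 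So the arithmetic is fine once organized this way; the missing case analysis is the real defect.
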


\begin{proof}  The claim is straightforward to prove if $d_0(x, y) \leq 4$, so assume $d_0(x, y) >4$.  Then $x$ and $y$ cannot both be in the base or both be on the same level of the same arm by Lemma \ref{diameter}.  Let $L_0\in \omega$ with $L_0>L + \Lev(y) + \Lev(x)+2$.  If $x, y$ lie on the same arm with $\Lev(x) >\Lev(y)$, or if $y$ is in the base, then let $z\in V(\Gamma_0)$ be on a different arm from $x$ with $\Lev(z) \geq L_0$.  Then certainly $d_0(x, z)> d(y, z) \geq \Lev(z)-1 >L$.  Letting $v\in V(\Gamma_0)$ be the unique point on the same arm as $y$ (or $v=b$ in case $y$ is in the base) satisfying $\Lev(v) = \Lev(y)$ and $v\in [x,z]$, we have by Lemma \ref{tightquarters} that any path from $x$ to $z$ must pass within distance $2$ of $v$, and since $d_0(v, y)\leq 2$ we know any path from $x$ to $z$ must pass within distance $4$ of $y$.

Else, select $z\in V(\Gamma_0)$ with $\Lev(z) \geq L_0$ and with $z$ on the same arm as $y$.  Then $\Lev(z) >L$ and $d_0(x, z) \geq d_0(b, z) - d_0(x, b) \geq \Lev(z) - (\Lev(x) +2) >L$, and $d_0(y, z) > L$ follows as well.  As before, $y$ is within distance $2$ of any geodesic segment $[x, z]$ and we argue similarly to obtain the same conclusion.
\end{proof}

The proofs of Theorems \ref{SQHS} and \ref{Bottlenecktheorem} hinge on the following property of $\Gamma_0$:

\begin{proposition}\label{thebigproposition}  If $g: \Gamma \rightarrow \Gamma_0$ is a quasi-isometry from a simplicial tree $\Gamma$ then there exists a set $A$ such that $A \cap X$ has cardinality $1$ for all $X\in \mathcal{Z}$.
\end{proposition}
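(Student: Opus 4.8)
The plan is to extract from the quasi-isometry $g \colon \Gamma \to \Gamma_0$ a canonical way of singling out, for each $X \in \mathcal{Z}$, a nonempty subset of $\Gamma$ that maps into the $X$-arm, and then to use the tree structure of $\Gamma$ to pick a \emph{distinguished} vertex out of that subset without any further choice. Fix the quasi-isometry constant $N$ for $g$. First I would observe that each arm of $\Gamma_0$ is ``long and thin'': by Lemma \ref{diameter} every level of every arm has diameter $\leq 2$, and an arm contains points of every level $\geq 1$, hence arbitrarily far from the base. Using Lemma \ref{closeenough} (with $x = g(v_0)$ for some fixed basepoint $v_0 \in \Gamma$, and $y$ ranging over deep points of a given arm) one sees that for each $X \in \mathcal{Z}$ the preimage under $g$ of the deep part of the $X$-arm is nonempty; more importantly, I want to see that this preimage, together with the tree geodesics in $\Gamma$ joining it to $v_0$, is forced to behave like a single ray.

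Concretely, for each $X$ let me consider the set $S_X \subseteq \Gamma$ of vertices $w$ such that $g(w)$ lies in the $X$-arm at level $> L$ for a threshold $L = L(N)$ chosen large compared with $N$. The key geometric point is that because $\Gamma$ is a tree, any two points of $S_X$ are joined by a unique geodesic in $\Gamma$, and the $g$-image of that geodesic is a path in $\Gamma_0$ from one deep point of the $X$-arm to another; by Lemma \ref{tightquarters} (or directly from the combinatorial structure, since to leave the $X$-arm one must pass through its level-$1$ vertices and then through $b$) such a path either stays in the $X$-arm or else passes within bounded distance of $b$. A path of the latter type in $\Gamma_0$ has both endpoints deep in the arm, so it is long; pulling back through the quasi-isometry, the corresponding tree geodesic in $\Gamma$ is long, and I can use this to show that the union $R_X$ of all tree geodesics from $v_0$ to points of $S_X$ is a subtree of $\Gamma$ that is ``quasi-isometric to a ray'' with quasi-isometry constants depending only on $N$ — in particular $R_X$ has exactly one end. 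Then distinct $X, X'$ give subtrees $R_X, R_{X'}$ whose intersection is bounded (again because in $\Gamma_0$ one cannot pass from the deep $X$-arm to the deep $X'$-arm without going near $b$, a long detour, which via $g$ forces the tree geodesics to separate after bounded length).

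Having arranged that each $R_X$ is a one-ended subtree of $\Gamma$ containing $v_0$, I would define $A$ as follows. Since $R_X$ has one end, for each $n$ the set of vertices of $R_X$ at distance exactly $n$ from $v_0$ is eventually (for $n$ large) a single vertex, call it $r_X(n)$; let $r_X$ be the ray in $\Gamma$ through these vertices. Now I pick, in a uniform way, one vertex on $r_X$ whose $g$-image lies in the $X$-arm: for instance the first vertex $w$ along $r_X$ (ordered by distance from $v_0$) with $\mathrm{Lev}(g(w)) > L$, which exists and is unique by construction. Set $w_X$ to be that vertex and let $a_X \in X$ be determined by $w_X$ via the coordinate of $g(w_X)$ in $(\bigcup \mathcal{Z}) \times (\omega \setminus \{0\})$ — more carefully, since $g(w_X)$ is within $\tfrac12$ of a vertex $(x,m)$ with $x \in X$, and a single point of the arm may be near several such vertices, I instead take $a_X$ to be the unique element of $X$ for which $(x,m) \in \mathrm{Star}(g(w_X))$ for \emph{every} admissible choice — or, to avoid that subtlety altogether, I push $w_X$ a little further along $r_X$ until $g(w_X)$ is an interior point of a single edge $E(\{v_0,v_1\}, v_i)$, read off the label, and let $a_X$ be the $X$-coordinate of that label. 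Then $A = \{a_X : X \in \mathcal{Z}\}$ meets each $X$ in exactly one point, as required.

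The main obstacle I anticipate is the middle step: showing, \emph{in ZF}, that the subtree $R_X$ of $\Gamma$ gathered from the deep $X$-arm is genuinely one-ended with quasi-isometry constants independent of $X$, and that the $R_X$ are pairwise eventually disjoint. This is where all the combinatorics of $\Gamma_0$ from Lemmas \ref{diameter}--\ref{closeenough} gets used, and where one must be careful that every sub-selection made (which vertex, which edge, which level) is canonical given $g$ and $v_0$ rather than requiring a choice — the whole point being that a tree has canonical geodesics, so once $g$ is in hand no arbitrary selections remain. The quasi-isometry inequalities themselves are routine bookkeeping with the constant $N$; the real content is the bottleneck-type behavior near $b$, which forces the tree geometry of $\Gamma$ to reflect the arm structure of $\Gamma_0$ in a way that is both rigid and choice-free.
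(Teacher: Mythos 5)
Your overall strategy (use the rigidity of the arms of $\Gamma_0$ plus canonical tree geodesics from a basepoint to make a choice-free selection) is the right one, and it parallels the paper's; but as written the proof has a genuine gap, and it sits exactly where you flag the ``main obstacle.'' The decisive step --- extracting a canonical ray $r_X$ from $R_X$ --- rests on the claim that, since $R_X$ is one-ended, the set of vertices of $R_X$ at distance exactly $n$ from $v_0$ is eventually a single vertex. That implication is false. A quasi-isometry $g$ is compatible with $\Gamma$ having, at every position along the ``trunk'' heading into the $X$-arm, a short side branch (of length up to roughly $N^2$) all of whose vertices, including its tip, also map deep into the $X$-arm: for instance a branch of length $k$ attached at a vertex mapping to level $n$ whose depth-$j$ vertex maps near level $n-j$ satisfies the quasi-isometry inequalities as long as $k = O(N^2)$. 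Then $R_X$ is one-ended (even uniformly quasi-isometric to a ray), yet every sufficiently large sphere of $R_X$ about $v_0$ contains several vertices, so no canonical ray, and hence no canonical ``first vertex $w$ with $\Lev(g(w))>L$,'' is produced by your recipe. Repairing this forces you to prove, in \textbf{ZF}, a statement like: geodesics from $v_0$ to arbitrarily deep preimages of the $X$-arm share unboundedly long initial segments --- and that is precisely the content you defer to ``this is where the lemmas get used,'' so the core of the proposition is not actually established in the proposal. (Two smaller points: the pairwise eventual disjointness of the $R_X$'s is not needed at all, since distinct $X$ are disjoint; and ``push $w_X$ further until $g(w_X)$ is an interior point of an edge'' may never terminate, since $g$ could send every vertex of $\Gamma$ to a vertex of $\Gamma_0$ --- though the vertex-coordinate/edge-label case split, which is why $\Gamma_0$ carries doubled labelled edges, fixes that easily.)

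For comparison, the paper avoids ends and rays entirely by working at a single finite scale. It first prunes $\Gamma$: with $K=7N^2$ it shows (using Lemma \ref{closeenough} together with the bottleneck Lemma \ref{tightquarters}) that the terminal-vertex pruning stabilizes after $K$ steps, so one may assume every vertex has valence $\geq 2$; it then proves (Lemma \ref{valence2}) that every vertex at distance $\geq K$ from a basepoint $v$ chosen near $g^{-1}(b)$ has valence exactly $2$. Consequently, on the sphere $W$ of radius $2K$ about $v$, each $X$-arm contains the image of exactly one $w\in W$ (existence by following the geodesic toward a deep preimage of the arm; uniqueness because two distinct points of $W$ must branch within radius $K$, hence lie at tree distance $\geq 2K$, contradicting the bottleneck estimates if their images shared an arm). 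The selection is then read off from $g\restriction W$ via the vertex coordinates and edge labels. Your side-branch phenomenon is exactly what the pruning-plus-valence argument is there to kill; if you want to salvage your ray-based version, you would need an analogous argument (e.g., define the ``trunk'' of $R_X$ as the set of vertices lying on geodesics to arbitrarily deep points of $S_X$ and prove it is a ray), which amounts to redoing the paper's lemmas.
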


We spend the remainder of this section proving Proposition \ref{thebigproposition}.  Let $g:\Gamma \rightarrow \Gamma_0$ be a quasi-isometry, with $\Gamma$ a simplicial tree with simplicial metric $d$.  Let $N \geq 4$ be an associated quasi-isometry constant for $g$.  We describe a pruning process for the tree $\Gamma$.  Call a vertex $w\in V(\Gamma)$ \textbf{terminal} if $w$ is of valence one.  Let $\Gamma^{(1)} = \Gamma'$ be the tree $\Gamma$ with all terminal vertices and adjoining edges removed, and in general let $\Gamma^{(n+1)} = (\Gamma^{(n)})'$.  It is clear that for all $n$ the graph $\Gamma^{(n)}$ is also a simplicial tree.  Also, for $n\geq 2$ and $w\in V(\Gamma^{(n)})\setminus V(\Gamma^{(n-1)})$ there exists some $w'\in V(\Gamma^{(n-1)}) \setminus V(\Gamma^{(n-2)})$ which is adjacent to $w$.  One can show by induction on $n$ that given adjacent vertices $w, w'$ such that $w\in V(\Gamma^{(n)})\setminus V(\Gamma^{(n-1)})$ and $w'\in V(\Gamma^{(n-1)}) \setminus V(\Gamma^{(n-2)})$, there is no geodesic segment $\gamma$ starting at $w$ of length greater than $n$  with $w'\in \gamma$.  We claim that the pruning process on $\Gamma$ stabilizes.  More concretely, letting $K = 7N^2$ we have the following:

\begin{lemma}  $\Gamma^{(K)} = \Gamma^{(K+1)}$
\end{lemma}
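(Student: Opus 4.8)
The claim is that the pruning process stabilizes after $K = 7N^2$ steps. The strategy is to argue by contradiction: suppose $\Gamma^{(K)} \neq \Gamma^{(K+1)}$, so there is a vertex $w \in V(\Gamma^{(K)}) \setminus V(\Gamma^{(K-1)})$... wait, let me reconsider. Actually if $\Gamma^{(K)} \neq \Gamma^{(K+1)}$ then $\Gamma^{(K)}$ still has terminal vertices, which means $\Gamma^{(K)}$ is nonempty and properly contains $\Gamma^{(K+1)}$; iterating, $\Gamma^{(2K)}$ is still nonempty. In particular there is a vertex $w$ surviving to level $2K$, or more usefully: there is a vertex at "pruning depth" greater than $K$, meaning a vertex $w$ and a geodesic ray/long segment emanating from it that stays deep in the tree. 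Concretely, if pruning hasn't stabilized by step $K$, then for every $n$ there is a geodesic segment in $\Gamma$ of length $> n$ both of whose endpoints... no. Let me think about what "pruning depth $> K$" buys us: it gives a long geodesic segment $\gamma$ in $\Gamma$ (length $> K$) such that the midpoint region is "insulated" — removing a bounded ball around a point far from both ends of $\gamma$ disconnects the two ends within $\Gamma$.

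Here is the cleaner line. The remark preceding the lemma says: if $w \in V(\Gamma^{(n)}) \setminus V(\Gamma^{(n-1)})$ and $w' \in V(\Gamma^{(n-1)}) \setminus V(\Gamma^{(n-2)})$ are adjacent, there is no geodesic segment starting at $w$ of length $> n$ through $w'$. Contrapositive / reformulation: if $\Gamma^{(K)} \neq \Gamma^{(K+1)}$, pick a terminal vertex of $\Gamma^{(K)}$; walking back down through $\Gamma^{(K)}, \Gamma^{(K-1)}, \ldots$ produces a geodesic path $\gamma$ in $\Gamma$ of length roughly $K$ that is "pruned from the outside," i.e. at its far end it is a genuine leaf-branch, and crucially the tree structure forces any path in $\Gamma$ between the two ends of $\gamma$ to pass through each intermediate vertex (trees have unique geodesics, and removing an edge disconnects). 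So take such a $\gamma: [0, L] \to \Gamma$ with $L \approx K = 7N^2$, and consider its image under $g$. The quasi-isometry inequalities give $d_0(g\gamma(0), g\gamma(L)) \geq \frac{1}{N}L - N \approx 7N - N = 6N$, and for the midpoint $m_\Gamma = \gamma(L/2)$ we get $g(m_\Gamma)$ is roughly equidistant (up to additive $N$) from the two endpoint-images. Now apply Lemma \ref{midpoint} / Lemma \ref{tightquarters} in $\Gamma_0$: any path from $g\gamma(0)$ to $g\gamma(L)$ in $\Gamma_0$ must come within $3$ (resp. $2$) of the genuine midpoint $m$ of a geodesic $[g\gamma(0), g\gamma(L)]$, and since the $g$-image of the path $\gamma$ is (coarsely) a path in $\Gamma_0$, this forces some $g\gamma(t)$ to be within a bounded distance — controlled by $N$ — of $m$.

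The engine of the contradiction: on the $\Gamma$ side, because $\gamma$ is a pruned branch, $\Gamma$ with a bounded ball around $\gamma(L/2)$ removed has the two ends of $\gamma$ in different components, OR — better matched to Lemma \ref{tightquarters} — I want to run this the other way, using Lemma \ref{closeenough}. Actually the right target is Lemma \ref{closeenough}: given the point $g(\gamma(L/2)) \in \Gamma_0$ (call it $y$) and the image point $g(\gamma(0))$ (call it $x$), choose by Lemma \ref{closeenough} a vertex $z \in V(\Gamma_0)$ with $\Lev(z), d_0(x,z), d_0(y,z)$ all huge (bigger than anything in sight, in particular bigger than $N(K+ \text{stuff})$) such that any path from $x$ to $z$ comes within $4$ of $y$. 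Pull $z$ back: there is $p \in \Gamma$ with $d_0(g(p), z) \leq N$, hence $g(p)$ is still very far from $x$ and from $y$. Then the path $\gamma' = [\gamma(0), p]$ in the tree $\Gamma$ maps to a path in $\Gamma_0$ from near $x$ to near $z$, which by choice of $z$ must pass within $4 + O(N)$ of $y = g(\gamma(L/2))$; so some $g(\gamma'(s))$ is within $O(N)$ of $g(\gamma(L/2))$, giving (by the lower quasi-isometry bound) $d_\Gamma(\gamma'(s), \gamma(L/2)) \leq N(O(N) + N) = O(N^2)$. But $\gamma'$ runs from $\gamma(0)$ to $p$ and, since $\Gamma$ is a tree and $\gamma$ is a pruned branch of length $\approx 7N^2$, either $\gamma'(s)$ lies on $\gamma$ (so $|s - L/2| = O(N^2)$, fine, no contradiction yet) — hmm.

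So the genuine obstacle, and the place the constant $7N^2$ is actually used, is the following counting: I need to choose $\gamma$ long enough — length $\gg N^2$ — that the point $\gamma(L/2)$ is so deep inside the pruned branch that $O(N^2)$-balls around it in $\Gamma$ are disjoint from both $\gamma(0)$ and from every vertex $p$ that $g$ sends near the far-away $z$. Because $\gamma$ is a pruned branch of pruning-depth $> K = 7N^2$, the point $\gamma(L/2)$ is distance $> $ (something like $7N^2 - L/2 \geq$ a positive multiple of $N^2$) from the "trunk" of $\Gamma$, and any vertex $p$ outside the branch — or any vertex deep in the branch but past where $g$ could send it near $z$ — is correspondingly far from $\gamma(L/2)$; meanwhile $g(p)$ far from $g(\gamma(0))$ forces $p$ past $\gamma(L/2)$ in the branch, yet $g(p)$ near $z$ (which has enormous level) should be incompatible with $p$ sitting in a bounded-radius branch whose $g$-image has bounded level. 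That last incompatibility — a pruned branch of bounded length can only have bounded-level image, so it cannot contain a point mapping near the high-level $z$ — is what kills it. The main obstacle is bookkeeping the three or four nested constants ($N$, then $O(N)$ from a single pull-back, then $O(N^2)$ from the lower bound, and checking $7N^2$ dominates all of them with room to spare) and making sure the pruned-branch geometry is invoked correctly: that pruning-depth $> 7N^2$ really does mean a geodesic segment of that length all of whose points separate the tree. I would isolate that geometric fact as the first step (reading it off the inductive remark already in the text), then do the $g$-image estimate, then the $z$-and-pullback step via Lemma \ref{closeenough}, and finally collide the level bound with $\Lev(z)$ large.
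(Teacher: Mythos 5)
Your overall architecture matches the paper's: from a vertex $w\in V(\Gamma^{(K)})\setminus V(\Gamma^{(K+1)})$ extract a dead-end branch $w=w_0,w_1,\ldots,w_K$ of length $K$, push it through $g$, invoke Lemma \ref{closeenough} to get a far-away vertex $z$ such that every path from $g(w)$ to $z$ passes within $4$ of a designated bottleneck point, pull $z$ back to some $p$ with $d_0(g(p),z)\leq N$, and collide the forced proximity with the tree geometry of the branch. The genuine gap is where you anchor the bottleneck: you take $y=g(\gamma(L/2))$, the image of the \emph{midpoint} of the branch, and you yourself observe that the collision does not close there (``no contradiction yet''). With $K=7N^2$ it cannot close. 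Since the branch hanging off $w$ lies entirely within distance $K$ of $w$ (this, not ``every point of $\gamma$ separates the tree,'' is the fact the inductive remark gives you), and since your choice of $z$ forces $p$ outside the branch, the geodesic $[\gamma(0),p]$ avoids the branch; but then its vertices are only guaranteed to be at tree-distance $\geq K/2=3.5N^2$ from $\gamma(L/2)$, which the quasi-isometry converts to $d_0(g(v_i),y)\geq \frac{7N}{2}-N=2.5N$. Lemma \ref{closeenough} forces some $g(v_i)$ within about $4+2N$ of $y$ (the paper's bookkeeping allows $4+4N$), and $2.5N>4+2N$ requires $N>8$ while only $N\geq 4$ is known (against $4+4N$ it never holds). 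The same failure undermines your final patch: to argue that the vertex forced near $y$ (hence $p$) must lie \emph{inside} the branch and then collide the branch's bounded diameter with $\Lev(z)$ large, you again need the proximity radius to be smaller than $K/2$ measured from the midpoint, which it is not. So as written the constants do not ``dominate with room to spare''; the key collision step is unestablished.

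The repair is exactly the paper's choice: anchor the bottleneck at the image of the far tip, $y=g(w_K)$, not at the midpoint. One first proves (using the inductive remark) that any vertex $w'$ with $d(w',w_K)\geq 2K$ satisfies $w\in[w_K,w']$; then every vertex $v_i$ of $[w,w']$ has $d(v_i,w_K)\geq K$, hence $d_0(g(v_i),g(w_K))\geq \frac{K}{N}-N=6N$, which beats $4+4N$ precisely because $N\geq 4$ --- that is the margin $7N^2$ was chosen to provide. With your midpoint anchoring you would instead need $K$ on the order of $10N^2$ (or $N>8$ with tighter bookkeeping), so the proposal does not prove the lemma for the stated constant.
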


\begin{proof}  Suppose for contradiction that $w\in V(\Gamma^{(K)}) \setminus V(\Gamma^{(K+1)})$.  Let $w_0 = w$ and pick $w_1\in V(\Gamma^{(K-1)})\setminus V(\Gamma^{(K)})$ which is adjacent to $w$.  Continue in this manner so that $w_n\in V(\Gamma^{(K-n)}) \setminus V(\Gamma^{(K-n+1)})$ is adjacent to $w_{n-1}$ for $n \leq K$.  Then $w_K\in V(\Gamma) \setminus V(\Gamma^{(1)})$ is distance $K$ away from $w$.  

Suppose that $w' \in \Gamma$ is distance at least $2K$ from $w_K$ and suppose for contradiction that $w \notin [w_K, w']$.  Let $n\in \omega$ be least such that $w_n\in  [w_K, w']$, and by assumption $n \geq 1$.  Notice that $d(w_n, w') \geq 2K-(K-n) = K+n$.  But the geodesic segment $[w, w'] = [w, w_n]\cup [w_n, w']$ gives a geodesic beginning at $w$, passing through $w_1$ which is of length $\geq n+(K+n)>K$, a contradiction.  Thus $d(w', w_K) \geq 2K$ implies $w \in [w_K, w']$.

Notice that $d_0(g(w), g(w_K)) \geq \frac{1}{N}d(w, w_K) - N = \frac{K}{N} - N = 6N$.  Letting $x = g(w)$ and $y = g(w_K)$ we pick a $z\in \Gamma_0$ as in Lemma \ref{closeenough} with $L = (2K+1)N + N^2$.  Select $w'\in \Gamma$ such that $d_0(g(w'), z) \leq N$.  Now

\begin{center}

 $d(w', w_K) \geq \frac{1}{N}d_0(g(w'), g(w_K)) -1$

$> \frac{1}{N}(d_0(z, g(w_K)) -N) -N$

$\geq \frac{1}{N}(L-N) -N$

$\geq (2K +N) -N=2K$
\end{center}

Letting $w = v_0, v_1, \ldots , v_p$ be the vertices in $\Gamma$ on the geodesic from $w$ to $w'$, listed in increasing distance from $w$, we know that $d(v_i, w_K) \geq K$ for all $i$.  Then $d_0(g(v_i), g(w_K)) \geq 6N$.  Also, $d_0(g(v_i), g(v_{i+1})) \leq 2N$ for all $i$ and $d_0(g(v_p), g(w')) \leq 2N$.  Notice that $$[g(v_0), g(v_1)] \cup [g(v_1), g(v_2)]\cup\cdots \cup [g(v_{p-1}), g(v_p)]\cup [g(v_p), g(w')]\cup [g(w'), z]$$ gives a path from $g(w)$ to $z$ which must come within distance $4$ of $g(w_K)$.  But this requires for some $i$ the inequality $d_0(g(v_i), g(w_K)) \leq 4 + 4N$, a contradiction.
\end{proof}

It is clear that the subtree $\Gamma^{(K)}$ satisfies $B(\Gamma^{(K)}, K) = \Gamma$, and so the inclusion map $\Gamma^{(K)} \rightarrow \Gamma$ is a quasi-isometry, and composing $g$ with inclusion gives a quasi-isometry from $\Gamma^{(K)}$ to $\Gamma_0$ (that the composition of quasi-isometries is a quasi-isometry is easily provable in \textbf{ZF}).  Thus we may assume without loss of generality that $\Gamma$ has only vertices of valence $2$ or greater.  Let $K = 7N^2$ as before.  Fix once and for all a vertex $v\in V(\Gamma)$ such that $d_0(g(v), b) \leq 3N$.  Such a selection of $v$ is possible by choosing $w\in \Gamma$ with $d_0(g(w), b) \leq N$ and a vertex $v\in V(\Gamma)$ with $d(v, w) \leq \frac{1}{2}$.

\begin{lemma}\label{valence2}  If $v'\in V(\Gamma)$ with $d(v, v') \geq K$ then $v'$ has valence exactly $2$.
\end{lemma}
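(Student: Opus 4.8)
The goal is to show that far from the basepoint $v$, the tree $\Gamma$ has no branch vertices, i.e. every vertex at distance at least $K$ from $v$ has valence exactly $2$. The strategy is to suppose, for contradiction, that some $v'$ with $d(v,v')\geq K$ has valence at least $3$, and then to exploit the tree structure at $v'$: a valence-$\geq 3$ vertex gives three geodesic rays (or at least long geodesic segments, since we have already reduced to the case where $\Gamma$ has no terminal vertices, hence via the pruning argument we may find long segments emanating from $v'$ in three distinct directions) pairwise meeting only at $v'$. Pushing these forward under $g$ produces three quasi-geodesic segments in $\Gamma_0$ that are forced, by the coarse geometry, to separate from one another; but $\Gamma_0$ is ``tree-like'' only along arms, and its branching all happens near the base, so three long diverging directions cannot all avoid passing through a common bottleneck near the base --- this will contradict a combinatorial separation statement analogous to the one used in the previous lemma via Lemma \ref{closeenough} and Lemma \ref{tightquarters}.

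More concretely, here is the sequence of steps I would carry out. First, fix $v'$ with $d(v,v')\geq K$ and valence $\geq 3$, and pick three vertices $u_1,u_2,u_3$ adjacent to $v'$ lying in three distinct components of $\Gamma\setminus\{v'\}$; using the no-terminal-vertex reduction together with the inductive length bound from the pruning discussion, extend each $u_j$ to a vertex $w_j$ with $d(v',w_j)$ as large as desired, with $v'\in[v,w_j]$ for at least two of the three (in fact $v'$ lies on the geodesic between $w_j$ and $w_{j'}$ for $j\neq j'$, and lies on $[v,w_j]$ for at least two values of $j$ since the fourth ``direction'' toward $v$ occupies only one component). Second, transport to $\Gamma_0$: set $x = g(v)$, note $d_0(x,b)\leq 3N$, and for each $j$ let $y_j = g(w_j)$; since $d(v',w_j)$ is large, the points $g(v')$, $y_1$, $y_2$, $y_3$ are pairwise far apart, and the tree condition forces $g(v')$ to be within bounded distance of each geodesic $[y_j, y_{j'}]$ in $\Gamma_0$ — this is where I would use that $g$ sends the geodesic $[w_j, w_{j'}]$ through $v'$ to a path through $g(v')$, combined with Lemma \ref{tightquarters} to say this path, hence any path, comes within distance $2$ of any point on $[y_j,y_{j'}]$. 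Third, derive the combinatorial contradiction: in $\Gamma_0$, the point $g(v')$ lies close to all three geodesics $[y_1,y_2]$, $[y_1,y_3]$, $[y_2,y_3]$; by Lemma \ref{geodesicmotion} each such geodesic is monotone in level except possibly once dipping to the base, so three points $y_1,y_2,y_3$ pairwise far from $b$ and pairwise far from each other force at least two of them onto the same arm (there being ``no room'' for a genuine tripod in $\Gamma_0$ away from the base), and then $g(v')$ would have to be simultaneously near the base (to mediate the two arms) and near the third geodesic's monotone portion high up an arm, which is impossible once distances exceed the relevant quasi-isometry constants. Combining with $d_0(g(v'),b)$ being forced large (because $d(v,v')\geq K$ gives $d_0(g(v'),x)\geq K/N - N$ and $x$ is near $b$) completes the contradiction.

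The main obstacle I anticipate is Step 3, making precise the intuition that $\Gamma_0$ admits no coarse tripod far from the base. The level function and Lemma \ref{geodesicmotion} are exactly the tools for this: one wants to say that if $p_1,p_2,p_3$ are pairwise at distance $>C$ (for $C$ depending on $N$) and each pair's geodesic comes within $2$ of a common point $q$, then $q$ must have $\Lev(q)$ small, because along any arm the geometry is a line and a median point of three points on a common line cannot be ``between'' all three pairs unless two of the points coincide coarsely. So one reduces to the case where at least two of the $p_j$ lie in the base or on distinct arms, and then the geodesic between those two passes through the base, pinning $q$ near the base; but then the third pair's geodesic, which also must come near $q$, together with Lemma \ref{closeenough}-style reasoning about which level-sets separate, yields the contradiction with $\Lev(g(v'))$ being large. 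The bookkeeping of constants ($N$, $K=7N^2$, the additive errors $2N$ along $g$-images of edges, the $2$'s and $4$'s from the earlier lemmas) is routine but must be tracked carefully; I would choose the $w_j$ far enough out that every inequality closes with room to spare.
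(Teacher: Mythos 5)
There is a genuine gap at the heart of your Step 2/Step 3: the claim that $g(v')$ lies within bounded distance of each geodesic $[y_j,y_{j'}]$. Lemma \ref{tightquarters} goes in the opposite direction: it says that every point \emph{of the geodesic} $[y_j,y_{j'}]$ is within distance $2$ of any path joining $y_j$ to $y_{j'}$, i.e.\ the geodesic lies in a neighborhood of the image path. It does \emph{not} say that a point of the path (such as $g(v')$) lies near the geodesic; an arbitrary path can wander far from the geodesic while still sweeping past every point of it. What you actually need is quasi-geodesic stability (a Morse-type lemma) for the concatenated path of $g$-images of the tree geodesic through $v'$, proved in \textbf{ZF} with explicit constants. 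Even granting that, your constants do not obviously close: the only lower bound available on $\Lev(g(v'))$ is of order $3N$ (coming from $d(v,v')\geq K=7N^2$ and $d_0(g(v),b)\leq 3N$), and this bound cannot be improved by pushing the $w_j$ farther out, whereas a Morse constant for $(2N,2N)$-quasi-geodesics in a $2$-hyperbolic space need not be below $3N$. Relatedly, your intermediate assertion that three points pairwise far apart and far from $b$ must have two on a common arm is false as stated (take three points high on three distinct arms); it only becomes true after you have pinned $g(v')$ near all three pairwise geodesics, which is exactly the unproved step.

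The paper's proof avoids the median picture entirely and needs no stability result. It takes only \emph{two} branches at $v'$, both pointing away from $v$, of very different lengths ($K$ and $K_1=N^2(K+K_0+9)$), ending at $w_0$ and $w_1$. Since every vertex on these branches is tree-distance $\geq K$ from $v$, all their images are $\geq 3N$ from $b$, and because consecutive images along the branches are $\leq 2N$ apart, a crossing between different arms would force some image within $N$ of $b$; hence all images lie on one arm. The length asymmetry then gives $\Lev(g(v))\leq 3N\leq 10N\leq\Lev(g(w_0))\leq\Lev(g(w_1))-N$, so $g(w_0)$ sits within $2$ of \emph{any} geodesic $[g(v),g(w_1)]$, and Lemma \ref{tightquarters} is applied in its correct direction to the image of the tree path from $v$ through $v'$ up the long branch to $w_1$: that path avoids the short branch, so all its image vertices are $\geq 6N$ from $g(w_0)$, yet it must pass within $4$ of $g(w_0)$ -- a contradiction with room to spare. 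If you want to salvage your tripod approach, you would have to either prove a choice-free Morse lemma with constants compatible with $K=7N^2$, or replace the median step with an explicit bottleneck point as the paper does.
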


\begin{proof}  Suppose for contradiction that $v'$ has valence at least $3$ and let $K_0 = d(v', v) \geq K$.  Select $w_{1, 0}, w_{2, 0}, \ldots , w_{K, 0}= w_0 \in V(\Gamma)$  such that $d(w_0, v) = d(w_0, v') + d(v', v)$ and $v', w_{1, 0}, w_{2, 0}, \ldots , w_{K, 0}= w_0$ are the vertices through which the geodesic from $v'$ to $w_0$ passes, listed in order.  Thus $d(v', w_0) = K$.  Let $K_1= N^2(K+K_0 +9)$.  As $v'$ is of valence at least $3$ we also have vertices $w_{1, 1}, w_{2,1}, \ldots , w_{K_1,1}=w_1$  where $d(v, w_1) = d(v, v') +d(v', w_1)$;  $v', w_{1, 1}, \ldots, w_{K_1,1}$ are the vertices through which the geodesic from $v'$ to $w_1$ move; and $w_{1, 1} \neq w_{1, 0}$.  These selections are possible because $v'$ is of valence $\geq 3$ and since all $w_{i, j}$ are of valence at least $2$.  Making such a finite number of choices, where the number of choices is already known, is still within the purview of \textbf{ZF}.  Let $v=v_0, v_1, v_2, \ldots, v_{K_0} = v'$ be the vertices of the geodesic segment $[v, v']$ listed in the order in which they are traversed on the geodesic from $v$ to $v'$.

Notice

\begin{center}  $d_0(g(v'), b) \geq d_0(g(v'), g(v)) - d_0(g(v), b)$

$\geq (\frac{1}{N}d(v', v) -N) -3N$

$\geq \frac{K}{N} - N -3N$

$=3N$

\end{center}

\noindent and by the same token we have $d_0(g(w_{i, j}), b) \geq 3N$ for all $w_{i, j}$ which we have defined.  Then all $g(w_{i, j})$ and also $g(v')$ are not in the base of $\Gamma_0$, and we claim in fact that they must be in the same arm, say the $X$-arm.  We prove this for $g(w_0)$ and $g(w_1)$ and the proof in all other cases is completely analogous.  If $g(w_0)$ and $g(w_1)$ are in different arms then any path from $g(w_0)$ to $g(w_1)$ must pass through $b$.  Then the path given by 

\begin{center}

$[g(w_0) = g(w_{K, 0}), g(w_{K-1, 0})]\cup\cdots \cup [g(w_{2, 0}), g(w_{1, 0})]\cup[g(w_{1, 0}), g(v')]$

$\cup [g(v'), g(w_{1, 1})]\cup \cdots \cup [g(w_{K_1-1, 1}), g(w_{K_1, 1}) = g(w_1)]$

\end{center}

\noindent must pass through $b$, but each listed segment of the path is of length $\leq 2N$ and so this would bring either $g(v')$ or some $g(w_{i, j})$ within distance $N$ of $b$, which is a contradiction.  We further note that

\begin{center}$d_0(g(w_0), b) \geq d_0(g(w_0), g(v)) - d_0(g(v), b)$

$\geq (\frac{2K}{N} - N) - 3N$

$= 10N$
\end{center}

\noindent and

\begin{center}  $d_0(g(w_0), b) \leq d_0(g(w_0), g(v)) + d_0(g(v), b)$

$\leq (Nd(w_0, v) + N) +3N$

$= N(K +K_0) +4N$
\end{center}

\noindent and

\begin{center} $d_0(g(w_1), b) \geq d_0(g(w_1), g(v)) - d_0(g(v), b)$

$\geq (\frac{1}{N}d(w_1, v) -N) -3N$

$>(\frac{1}{N}d(w_1, v') -N) -3N$

$\geq \frac{N^2(K+K_0 +9)}{N} - 4N$

$\geq N(K+K_0) +5N$
\end{center}

The point $g(w_0)$ must come within distance $2$ of any geodesic $[g(v), g(w_1)]$ since $g(w_0)$ and $g(w_1)$  lie on the $X$-arm  with $10 N\leq \Lev(g(w_0))\leq \Lev(g(w_1)) - N$ and $\Lev(g(v)) \leq 3N$ (the point $g(v)$ needn't lie on the $X$-arm).  Then by Lemma \ref{tightquarters} we know that the path given by 

\begin{center}
$ [g(v) = g(v_0), g(v_1)]\cup[g(v_1), g(v_2)]\cup\cdots \cup [g(v_{K_0-1}), g(v_{K_0}) = g(v')]\cup[g(v'), g(w_{1, 1)}]\cup$

$\cdots \cup [g(w_{K_1-1, 1}), g(w_{K_1, 1})= g(w_1)]$ 

\end{center}

\noindent must pass within distance $4$ of $g(w_0)$.  But then some $g(v_i)$ or $g(w_{j,1})$ must be within $N +4$ of $g(w_0)$, contradicting $d_0(g(w_0), g(w_{j, 1})), d_0(g(w_0), g(v_i)) \geq \frac{K}{N} - N = 6N$ for all $0\leq i \leq K_0$ and $1 \leq j \leq K_1$.
\end{proof}

Let $W$ be the set of all vertices of distance exactly $2K$ from $v$.

\begin{lemma}  For each $w\in W$ the point $g(w)$ is in an arm of $\Gamma_0$.  Also, for each $X\in \mathcal{Z}$ there is a unique $w\in W$ with $g(w)$ in the $X$-arm.
\end{lemma}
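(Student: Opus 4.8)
The plan is to show two things about the finite-distance sphere $W = \{w \in V(\Gamma) : d(v,w) = 2K\}$: first, that every $g(w)$ lands in some arm (not the base) of $\Gamma_0$, and second, that the assignment $w \mapsto$ (the arm containing $g(w)$) hits every $X$-arm exactly once. By Lemma \ref{valence2}, each $w \in W$ has valence exactly $2$ in $\Gamma$ (since $2K \ge K$), and since all valence-$1$ vertices have been pruned, removing $v$ from $\Gamma$ leaves finitely many components (one per edge at $v$), each a subtree in which every vertex beyond $W$ is still reachable; in particular one can extend each $w \in W$ to a geodesic ray, and reach vertices arbitrarily far from $v$ through $w$. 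This is the combinatorial backbone I would exploit.

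For the first claim, I would argue as in Lemma \ref{valence2}: since $d(v,w) = 2K$ we get $d_0(g(w), g(v)) \ge \frac{2K}{N} - N$, and combined with $d_0(g(v), b) \le 3N$ this forces $d_0(g(w), b) \ge \frac{2K}{N} - 4N = 10N > 1$, so $g(w)$ is not in the base and hence lies in a unique $X$-arm. For the second claim I would handle existence and uniqueness separately. For uniqueness, suppose $w_0, w_1 \in W$ are distinct with $g(w_0), g(w_1)$ both in the $X$-arm. The geodesic $[w_0, w_1]$ in the tree passes through $v$ (or at least through the branch point where the rays to $w_0$ and $w_1$ diverge, which lies on $[v,w_0]$ and $[v,w_1]$), and I would push the two vertices $w_0, w_1$ further out along their arms to points $w_0', w_1'$ with $\Lev(g(w_0'))$, $\Lev(g(w_1'))$ enormous. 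Then the tree-geodesic $[w_0', w_1']$ maps under $g$ to a coarse path all of whose vertices have $g$-image at distance $\ge 6N$ from $g(v)$, hence (by the level estimate) $\Lev \ge$ something like $6N - 3N = 3N$ along the whole path; but the branch point $u$ on $[v, w_0']$ satisfies $d(u, v) \le 2K$ roughly... — more carefully, the image of $[v, w_0']$ or $[v,w_1']$ must pass near $g(v)$ at level $\le 3N$, which by Lemma \ref{geodesicmotion} means the level drops to near $0$ somewhere; but then on the $X$-arm the image path connecting $g(w_0')$ (high level) to $g(w_1')$ (high level) through low level would, via Lemma \ref{tightquarters}, force some image vertex near $g(v)$, contradiction with it being in $W$-territory. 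The cleanest route is: apply Lemma \ref{closeenough} with $x = g(w_0)$, $y = g(v)$ to get $z$ far out with every path $g(w_0) \to z$ passing within $4$ of $g(v)$; realize $z$ as $g(w')$ for a vertex $w'$ reachable through $w_1$ (using valence and the pruning), get the tree-geodesic $[w_0, w']$ not passing through $v$'s neighborhood in the relevant sense, and derive the same kind of contradiction as in the proof of Lemma \ref{valence2}.

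For existence, fix $X \in \mathcal{Z}$ and pick any vertex $(x, n) \in V(\Gamma_0)$ with $x \in X$ and $n$ large; choose $w' \in V(\Gamma)$ with $d_0(g(w'), (x,n)) \le N$, so $g(w')$ is deep in the $X$-arm. If $d(v, w') \ne 2K$, I want to replace $w'$ by a vertex in $W$ that still maps into the $X$-arm. Since $g(w')$ is deep in the $X$-arm and $g(v)$ is near the base, Lemma \ref{closeenough} (applied with $x = g(v)$, $y = g(w')$ — or a direct argument) shows any tree-geodesic from $v$ to $w'$ has image passing near $g(w')$ only after climbing high, so the unique vertex $w \in W$ on the tree-geodesic $[v, w']$ has $g(w)$ on the $X$-arm: indeed $[v,w']$ passes through $w$, and by Lemma \ref{tightquarters} every path from $g(v)$ to $g(w')$ comes within $2$ of $g(w)$; since such a path can be routed to stay in the base $\cup$ $X$-arm only (is that true? — the image of the actual geodesic $[v,w']$ does this up to the quasi-isometry error, landing $g(w)$ within bounded distance of the $X$-arm), $\Lev(g(w)) \ge 10N$ forces $g(w)$ into the $X$-arm. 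No choice is used because for each $X$ we make only finitely many selections from sets whose sizes are bounded in advance — this is the point the paper keeps stressing.

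The main obstacle will be the existence part: making precise the claim that the tree-geodesic from $v$ to a chosen deep-$X$-arm vertex $w'$ must pass through a $W$-vertex whose image is \emph{in the $X$-arm} rather than merely in the base or some other arm. The subtlety is that $g$ is only a quasi-isometry, so the image of $[v, w']$ is a quasi-geodesic, not a geodesic, and one must use the explicit geometry of $\Gamma_0$ (Lemmas \ref{diameter}, \ref{geodesicmotion}, \ref{tightquarters}) to show that a path from the base region to a deep point of the $X$-arm, when it reaches intermediate levels, is trapped in (a neighborhood of) the $X$-arm. I would handle this by taking $w'$ so that $g(w')$ has level much larger than $2K \cdot N + N$, so that the vertex $w \in W \cap [v, w']$ has $g(w)$ at a level that is at least $10N$ (too high for the base) and at most $2K \cdot N + 4N$ (so it is at a controlled, moderate level), and then Lemma \ref{tightquarters} applied to the point $g(w)$ — which lies within $2$ of the geodesic $[g(v), g(w')]$ — together with the fact that $[g(v), g(w')]$ must (by the structure of arms and base) route through the $X$-arm to reach the deep point $g(w')$, pins $g(w)$ to within bounded distance of a level-$\Lev(g(w))$ vertex of the $X$-arm, hence into the $X$-arm itself.
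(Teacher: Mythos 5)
Your first claim (every $g(w)$ has $d_0(g(w),b)\ge 10N$, hence lies in an arm) is exactly the paper's argument and is fine. The two serious problems are in the existence and uniqueness halves. For existence, your argument hinges on the assertion that $g(w)$ ``lies within $2$ of the geodesic $[g(v),g(w')]$.'' Nothing in the paper gives this: the image of the tree geodesic $[v,w']$ is only a quasi-geodesic, and Lemma \ref{tightquarters} runs in the wrong direction (it says every path comes near the geodesic's points, not that a given point of your path is near the geodesic). What you would need is quasi-geodesic stability (the Morse lemma), which is never proved here and which the construction is designed to avoid. The paper's route is much more elementary and sidesteps this entirely: take $x$ with $g(x)$ in the $X$-arm at level $\ge 2KN+2N^2+3N$, let $w$ be the vertex of $[v,x]$ at distance exactly $2K$ from $v$, and observe that \emph{every} vertex of $[w,x]$ is at tree-distance $\ge 2K$ from $v$, hence its image has level $\ge 10N$; since consecutive images are within $2N$ of each other and any two points in different arms are at distance at least the smaller of their distances to $b$ (any path between different arms passes through $b$), the whole chain of images is trapped in a single arm, which contains $g(x)$, so $g(w)$ is in the $X$-arm. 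Your phrase ``a path from the base region to a deep point of the $X$-arm is trapped in a neighborhood of the $X$-arm at intermediate levels'' is false for the full path from $g(v)$ (it could climb another arm and come back); it is the restriction to the sub-geodesic beyond $w$, where all image levels are forced to be $\ge 10N$, that makes the trapping argument work.

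For uniqueness, your ``cleanest route'' does not work as stated. Applying Lemma \ref{closeenough} with $x=g(w_0)$, $y=g(v)$ puts the bottleneck at $g(v)$, but every image of a tree path through or near $v$ passes near $g(v)$ anyway, so no contradiction can come from that; moreover you cannot ``realize $z$ as $g(w')$ for a vertex $w'$ reachable through $w_1$'' --- the vertex $w'$ with $d_0(g(w'),z)\le N$ is wherever it happens to be, and its position in $\Gamma$ relative to $w_1$ is uncontrolled. The bottleneck has to be placed at $g(w_0)$, and no appeal to Lemma \ref{closeenough} is needed: since $g(w_0),g(w_1)$ lie in the same arm and $d_0(g(w_0),g(w_1))\ge \frac{2K}{N}-N$ (because the branch vertex of $[v,w_0]$ and $[v,w_1]$ has valence $\ge 3$ if it is not $v$, hence by Lemma \ref{valence2} lies within distance $K$ of $v$, so $d(w_0,w_1)\ge 2K$), one may assume $\Lev(g(w_1))\ge \Lev(g(w_0))+10N-4$; then $g(w_0)$ sits at an intermediate level of the $X$-arm, so it is within $2$ of any geodesic $[g(v),g(w_1)]$ (levels of an arm have diameter $\le 2$, Lemma \ref{diameter}), and Lemma \ref{tightquarters} forces the concatenated path through the images of the vertices of $[v,w_1]$ to come within $4$ of $g(w_0)$, hence some $g(v_i)$ within $N+4$; but every vertex of $[v,w_1]$ is at tree-distance $\ge K$ from $w_0$, so $d_0(g(v_i),g(w_0))\ge 6N$, a contradiction. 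Your first, rougher sketch (pushing out along rays and dropping levels) gestures at this but never pins down the quantitative branch-point bound ($<K$, not ``roughly $2K$'') or the bottleneck location, so as written the uniqueness argument has a genuine gap as well.
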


\begin{proof}  If $w\in W$ then 

\begin{center}
$d_0(g(w), b) \geq d_0(g(w), g(v)) - d_0(g(v), b)$

$\geq (\frac{2K}{N} -N) -3N = 10N$
\end{center}

so that $\Lev (g(w)) \geq 10N >0$ and $g(w)$ is in an arm.

Suppose that for distinct $w_0, w_1 \in W$ we have $g(w_0)$ and $g(w_1)$ in the $X$-arm.  As $w_0$ and $w_1$ are distinct we know that $d(w_0, w_1) \geq 2K$ since letting $v'\in V(\Gamma)$ satisfy $[v, w_0]\cap [v, w_1] = [v, v']$, we have $d(v, v') <K$ by Lemma \ref{valence2}.  Then $d_0(g(w_0), g(w_1)) \geq \frac{2K}{N} -N = 10N$.  Let without loss of generality $\Lev(g(w_1)) \geq \Lev(g(w_0)) + 10N -4$.  Let $v = v_0, v_1, \ldots, v_{2K} = w_1$ be the vertices in $[v, w_1]$ listed in increasing distance from $v$.  As in the proof of Lemma \ref{valence2} the path given by $[g(v_0), g(v_1)]\cup\cdots \cup [g(v_{2K-1}), g(v_{2K})]$ is distance at most $4$ away from $g(w_0)$, so some $g(v_i)$ must be within $N + 4$ of $g(w_0)$, contradicting $d_0(g(v_i), g(w_0)) \geq \frac{K}{N} -N = 6N$.

Let $X\in \mathcal{Z}$ be given and select $x\in \Gamma$ such that $g(x)$ is in the $X$-arm and $\Lev(g(x)) \geq 2KN +2N^2 +3N$.  Then 

\begin{center}  $d(x, v) \geq \frac{1}{N}d_0(g(x), g(v)) -1$

$> \frac{1}{N}(d_0(g(x), b)-d_0(g(v), b)) -N$

$\geq \frac{1}{N}(d_0(g(x), b)-3N) -N$

$\geq \frac{2KN +2N^2}{N}-N = 2K +N$
\end{center}

Let $w\in [v, x]$ be the vertex such that $d(w, v) = 2K$ and since all vertices in the geodesic segment $[w,x]$ are at least distance $2K$ from $v$ we may argue as before that $g(w)$ is also in the $X$-arm.
\end{proof}

To finish the proof of Proposition \ref{thebigproposition} we let $h:W \rightarrow \bigcup \mathcal{Z}$ be defined by

\begin{center}

$h(w) = \begin{cases}\pi_1(g(w))$ if $g(w)\in V(\Gamma_0)\\\pi_1(\pi_2(E))$ where $g(w)\notin V(\Gamma_0)$ lies on the edge $E  \end{cases}$

\end{center}

where $\pi_1$ and $\pi_2$ are the projections to the first and second coordinates, respectively.  Thus $h(w)$ gives the first coordinate of $g(w)$ provided $g(w)\in V(\Gamma_0)$ and otherwise $h$ gives the first coordinate of the label of the edge on which $g(w)$ lies.  Let $A = h(W)$.  The check that $A$ satisfies the conclusion of Proposition \ref{thebigproposition} is straightforward.

\end{section}

\begin{section}{The Proofs of the Main Results}\label{lastsection}

In this section we restate and prove Theorems \ref{SQHS}, \ref{Bottlenecktheorem}, and then \ref{Notimplychoice}.

\begin{SQHS}  \textbf{SQHS} implies the axiom of choice.
\end{SQHS}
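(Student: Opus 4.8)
The plan is to derive the stated form of choice directly. Let $\mathcal{Z}$ be a nonempty set of pairwise disjoint nonempty sets; form the graph $\Gamma_0 = \Gamma_0(\mathcal{Z})$ of Section \ref{thefirstsectionofthebody}, which by Lemma \ref{hyp} is a geodesic $2$-hyperbolic metric space. The key move is to produce, working in \textbf{ZF} alone, a simplicial tree $T$ together with a quasi-isometry $f\colon\Gamma_0\to T$. Once this is done, \textbf{SQHS} (applied to the hyperbolic spaces $\Gamma_0$ and $T$) yields a quasi-isometry $g\colon T\to\Gamma_0$ in the reverse direction, and since $T$ is a simplicial tree, Proposition \ref{thebigproposition} applied to $g$ produces a set $A$ with $A\cap X$ of cardinality $1$ for every $X\in\mathcal{Z}$. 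As $\mathcal{Z}$ was arbitrary, this establishes the axiom of choice, so the only real content is the \textbf{ZF}-construction of $T$ and $f$.

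For $T$ I would take the ``infinite spider'' over $\mathcal{Z}$: vertex set $\{c\}\cup(\mathcal{Z}\times(\omega\setminus\{0\}))$ with $c\notin\mathcal{Z}\times(\omega\setminus\{0\})$, an edge joining $c$ to $(X,1)$ for each $X\in\mathcal{Z}$, and an edge joining $(X,n)$ to $(X,n+1)$ for each $X$ and each $n\geq 1$, equipped with the simplicial path metric. This is a simplicial tree, hence an $\mathbb{R}$-tree, hence geodesic and hyperbolic, and its definition invokes no choice. Define $f\colon\Gamma_0\to T$ by sending every point of the base to $c$ and sending a point of level $n\geq 1$ in the $X$-arm to $(X,n)$. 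This is well defined in \textbf{ZF}, since for each point of $\Gamma_0$ the level, and whether the point lies in the base or in a particular arm, are canonically determined (the arm being unique by pairwise disjointness of the members of $\mathcal{Z}$).

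What remains is to check that $f$ is a quasi-isometry with an explicitly computable constant, and I expect this case analysis to be the one genuinely fiddly step, though it is entirely routine. Surjectivity of $f$ onto $V(T)$, together with the fact that every point of $T$ lies within $\frac{1}{2}$ of a vertex, gives $B(f(\Gamma_0),2)=T$. For the metric comparison, given $x,y\in\Gamma_0$: if both lie in the base, or in the same level of a common arm, then $d_0(x,y)\leq 2$ by Lemma \ref{diameter} while $d_T(f(x),f(y))\leq 1$; if $x,y$ lie in a common arm at levels $m\leq n$, then, using that consecutive levels of an arm are joined by edges, $d_0(x,y)$ and $n-m=d_T(f(x),f(y))$ differ by less than $2$; and if $x,y$ lie in distinct arms at levels $m,n$, then every path between them meets the base, so $d_0(x,y)$ and $m+n=d_T(f(x),f(y))$ differ by less than $2$. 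In all cases $|d_0(x,y)-d_T(f(x),f(y))|<2$, so $N=2$ is a quasi-isometry constant for $f$, and the argument concludes as described above.
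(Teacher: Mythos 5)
Your proposal is correct and follows essentially the same route as the paper: the same spider tree over $\mathcal{Z}$, a choice-free quasi-isometry from $\Gamma_0$ onto it (the paper's map sends inter-level edges isometrically rather than collapsing each level to a vertex, but the two maps differ by at most $1$ pointwise), then \textbf{SQHS} to reverse the direction and Proposition \ref{thebigproposition} to extract the choice set. The only microscopic slip is the claimed strict bound $|d_0(x,y)-d_T(f(x),f(y))|<2$ (two points in the same level of an arm can realize difference exactly $2$), which is harmless since $N=2$ still works as a quasi-isometry constant.
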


\begin{proof} Assume \textbf{SQHS}, let $\mathcal{Z}$ be a nonempty collection of disjoint nonempty sets, and let $\Gamma_0$ be as constructed in Section \ref{thefirstsectionofthebody}.  We give a simplicial tree $\Gamma_1$ as follows:  Let $V(\Gamma_1) = (\mathcal{Z} \times (\omega\setminus\{0\}))\cup \{B\}$ where $B\notin \mathcal{Z}$.  Let $(v_0, v_1)\in E(\Gamma_1)$ if one of the following holds:

\begin{enumerate}

\item $v_i = B$ and $v_{1-i} = (Y, 1)$ for some $Y\in \mathcal{Z}$

\item $v_i = (Y, n)$ and $v_{1-i} = (Y, m)$ for some $Y\in \mathcal{Z}$ and $m = n\pm 1$

\end{enumerate}

Let $f: \Gamma_0 \rightarrow \Gamma_1$ be given in the following way.  Map vertices to vertices by letting $f(b) = B$ and $f((x, n)) = (X, n)$  where $x\in X$.  Map all points on an edge between $(x_0, n)$ and $(x_1, n)$ to $(X, n)$ (where $x_0, x_1\in X$) and map points on an edge between vertices of differing levels so that the restriction of $f$ to this edge is an isometry.  Notice that $f$ is onto and

\begin{center}  $d_0(x, y) - 2 \leq d_1(f(x), f(y)) \leq d_0(x, y)$

\end{center}

\noindent where $d_1$ is the simplicial metric on $\Gamma_1$.  Since $\Gamma_1$ is $0$-hyperbolic and $\Gamma_0$ is $2$-hyperbolic (Lemma \ref{hyp}) we have by \textbf{SQHS} a quasi-isometry $g: \Gamma_1 \rightarrow \Gamma_0$, which implies a selection for the collection $\mathcal{Z}$ by Proposition \ref{thebigproposition}.
\end{proof}

\begin{Bottlenecktheorem}  The Bottleneck Theorem  implies the axiom of choice.
\end{Bottlenecktheorem}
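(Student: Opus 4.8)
The plan is to mimic the proof of Theorem \ref{SQHS}, using the Bottleneck Theorem in place of \textbf{SQHS} to produce the simplicial tree quasi-isometric to $\Gamma_0$ that feeds into Proposition \ref{thebigproposition}. So assume the Bottleneck Theorem, let $\mathcal{Z}$ be a nonempty collection of pairwise disjoint nonempty sets, and build $\Gamma_0$ as in Section \ref{thefirstsectionofthebody}. Since $\Gamma_0$ is a geodesic metric space, to apply the Bottleneck Theorem I need to verify condition (2): there is some $\Delta>0$ such that for all $x,y\in\Gamma_0$ there is a midpoint $m=m(x,y)$ with $d_0(x,m)=d_0(y,m)=\frac12 d_0(x,y)$ through whose $\Delta$-neighborhood every path from $x$ to $y$ must pass. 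But this is exactly Lemma \ref{midpoint}, which gives such a midpoint with $\Delta=3$. Hence the Bottleneck Theorem yields a simplicial tree $\Gamma$ quasi-isometric to $\Gamma_0$.

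The one subtlety is the direction of the quasi-isometry. Proposition \ref{thebigproposition} requires a quasi-isometry $g\colon\Gamma\to\Gamma_0$ from the tree into $\Gamma_0$, whereas the Bottleneck Theorem, on its face, produces a quasi-isometry in the form ``$\Gamma$ is quasi-isometric to $\Gamma_0$,'' i.e. a map $\Gamma\to\Gamma_0$ — and indeed, as the authors note in the discussion after the statement of the Bottleneck Theorem, Manning's proof exhibits a simplicial tree $\Gamma$ together with a quasi-isometry \emph{from $\Gamma$ to $Y$}. So I would invoke this observation: the Bottleneck Theorem, as actually proved, gives a quasi-isometry $g\colon\Gamma\to\Gamma_0$ with $\Gamma$ a simplicial tree. (If one wished to be maximally careful and use only the literal ``quasi-isometric to'' statement, one would be in danger of circularity with Theorem \ref{SQHS}; but the safe route is to use the explicit direction Manning's proof provides, which is precisely why part (1) was restated in the excerpt.)

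With $g\colon\Gamma\to\Gamma_0$ in hand, Proposition \ref{thebigproposition} immediately produces a set $A$ with $A\cap X$ of cardinality $1$ for every $X\in\mathcal{Z}$, which is the formulation of the axiom of choice adopted in the Introduction. Since $\mathcal{Z}$ was an arbitrary nonempty collection of pairwise disjoint nonempty sets, the axiom of choice follows. The main obstacle — really the only point requiring care — is the bookkeeping around the direction of the quasi-isometry, i.e. making sure we extract a map $\Gamma\to\Gamma_0$ rather than only the symmetric-looking conclusion; everything else is a direct citation of Lemma \ref{midpoint} and Proposition \ref{thebigproposition}.
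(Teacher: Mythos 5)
Your proposal is correct and follows essentially the same route as the paper: verify condition (2) of the Bottleneck Theorem via Lemma \ref{midpoint}, obtain a simplicial tree with a quasi-isometry into $\Gamma_0$, and conclude via Proposition \ref{thebigproposition}. Your care about the direction of the quasi-isometry is exactly the point the paper addresses by restating part (1) of Manning's theorem, so no gap remains.
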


\begin{proof}  Assume the Bottleneck Theorem, let $\mathcal{Z}$ be a nonempty collection of nonempty pairwise disjoint sets and let $\Gamma_0$ be as constructed in Section \ref{thefirstsectionofthebody}.  By Lemma \ref{midpoint} we know $\Gamma_0$ satisfies condition (2) of the Bottleneck Theorem, so there exists a simplicial tree $\Gamma_2$ and quasi-isometry $g:\Gamma_2 \rightarrow \Gamma_0$.  By Proposition \ref{thebigproposition} there is a selection on $\mathcal{Z}$ and we have proven the axiom of choice.
\end{proof}

\begin{Notimplychoice}  The symmetry of the quasi-isometry relation between $\mathbb{R}$-trees follows from \textbf{ZF}.
\end{Notimplychoice}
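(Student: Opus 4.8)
The plan is to prove the stronger statement announced in the introduction: given a quasi-isometry $f:T\to S$ with $T$ an $\mathbb{R}$-tree, construct (in \textbf{ZF}) a quasi-isometry $h:S\to T$. Since quasi-isometry is reflexive and transitive in \textbf{ZF}, symmetry of the relation between $\mathbb{R}$-trees then follows immediately once we observe that if $S$ and $T$ are $\mathbb{R}$-trees and $T$ is quasi-isometric to $S$, the hypothesis of the stronger statement is met. So the whole content is the one-sided construction of $h$.

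First I would fix a quasi-isometry constant $N\in\omega$ for $f$, so $B(f(T),N)=S$ and $\frac1N d_T(x,y)-N\le d_S(f(x),f(y))\le N d_T(x,y)+N$. The naive approach — for each $s\in S$ pick some $x\in T$ with $d_S(f(x),s)\le N$ and set $h(s)=x$ — is exactly the step that needs choice, so the entire point is to make this selection canonically using the tree structure of $T$. The idea is: for $s\in S$, the set $f^{-1}(B(s,N))\subseteq T$ is nonempty; let $P_s\subseteq T$ be its convex hull (the union of all geodesic segments between its points), which is a well-defined subtree of $T$ with no choice needed. I would then define $h(s)$ to be a canonical ``center'' of $P_s$. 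Concretely, fix a basepoint $t_0\in T$ (say $t_0$ can be taken canonically as follows: pick any $s_0\in S$ and note $f^{-1}(B(s_0,N))$ is nonempty, but picking a point of it is again choice — so instead I would let $h$ depend on $f$ alone without a basepoint, defining $h(s)$ as the point of $P_s$ nearest to $P_{s'}$-type data, or better, using a metric midpoint construction intrinsic to $P_s$). The cleanest route: show $P_s$ has bounded diameter (bounded in terms of $N$ only), and define $h(s)$ to be the unique point minimizing the function $x\mapsto\sup_{y\in P_s}d_T(x,y)$ over $x\in P_s$ — in an $\mathbb{R}$-tree this ``circumcenter'' exists and is unique, and its construction invokes no choice since it is a canonical function of the set $P_s$, which is itself a canonical function of $s$.

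The key steps in order: (i) show $P_s$ has diameter bounded by a constant $C=C(N)$ — this follows because any two points of $f^{-1}(B(s,N))$ are within $d_T\le N(d_S(f(x),f(y))+N)\le N(2N+N)=3N^2$, and in a tree the convex hull of a set of diameter $\le 3N^2$ still has diameter $\le 3N^2$; (ii) show the circumcenter map $Q\mapsto c(Q)$ is well-defined for nonempty bounded subtrees $Q$ of an $\mathbb{R}$-tree and satisfies $d_T(c(Q),Q)=0$, i.e. $c(Q)\in \overline{Q}$, with $\operatorname{diam}$-type Lipschitz control $d_T(c(Q),q)\le \operatorname{diam}(Q)$ for $q\in Q$ (standard $0$-hyperbolic geometry, provable in \textbf{ZF}); (iii) set $h(s)=c(P_s)$ and verify the quasi-isometry inequalities: for the lower bound, $d_T(h(s),h(s'))\le d_T(h(s),x)+d_T(x,x')+d_T(x',h(s'))$ for any $x\in f^{-1}(B(s,N))$, $x'\in f^{-1}(B(s',N))$, giving an upper bound on $d_T(h(s),h(s'))$ in terms of $d_S(s,s')$; for the other direction, since $f(h(s))$ is within $N\cdot C+N$ of $f(x)$ for $x\in P_s$ and $f(x)$ is within $N$ of $s$, we get $d_S(f(h(s)),s)\le NC+2N$, and then $d_T(h(s),h(s'))\ge \frac1N d_S(f(h(s)),f(h(s')))-N\ge \frac1N(d_S(s,s')-2NC-4N)-N$; (iv) check $B(h(S),r)=T$ for suitable $r$: given $t\in T$, the point $s=f(t)$ has $t\in f^{-1}(B(s,N))\subseteq P_s$, so $d_T(t,h(s))\le C$, hence $r=C$ works.

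The main obstacle is step (ii) combined with the insistence on avoiding any hidden choice: I must verify that the circumcenter of a bounded subtree of an $\mathbb{R}$-tree genuinely exists, is unique, lies in the closure of the subtree, and — crucially — that the whole assignment $s\mapsto P_s\mapsto c(P_s)$ is given by an explicit \textbf{ZF}-definable formula with no appeal to choice (in particular $P_s$ must be formed as the literal union of geodesic segments, and one must be slightly careful that geodesic segments in an $\mathbb{R}$-tree are unique so that ``the'' segment $[x,y]$ is well-defined without choosing a parametrization). A secondary technical point is that the infimum defining the circumcenter may a priori not be attained if $Q$ is not closed; the fix is to take the circumcenter of $\overline{P_s}$ instead, or to observe that $P_s$, being a finite-diameter convex subset of an $\mathbb{R}$-tree, has the property that the radius function attains its minimum at a point of $P_s$ itself — this requires a short argument using the fact that $\mathbb{R}$-trees are complete-geodesic in the relevant local sense, or simply taking closures throughout, which costs nothing. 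Once these geometric facts about $0$-hyperbolic spaces are in hand, all the quasi-isometry estimates in (iii) and (iv) are routine triangle-inequality bookkeeping.
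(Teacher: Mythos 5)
Your overall architecture is the same as the paper's (a canonical, choice-free selection of a point of $T$ attached to each set $f^{-1}(B(s,N))$, whose pairwise-distance bound $3N^2$ you derive exactly as the paper does, followed by routine estimates), but the specific canonical point you choose creates a genuine gap at your step (ii). An $\mathbb{R}$-tree in this paper is just a geodesic $0$-hyperbolic space; it need not be complete, and the circumcenter of a bounded subtree need not exist in $T$. Taking closures inside $T$ does not repair this, because the would-be circumcenter can be a point of the metric completion that is simply absent from $T$. Concretely, let $T$ be the half-open segment $[0,1)$ with, at each point $1-2^{-n}$, an attached segment of length $1-2^{-n}$; this is a geodesic $0$-hyperbolic space of diameter at most $4$. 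The constant map from $T$ to a one-point $\mathbb{R}$-tree $S$ is a quasi-isometry, and for the unique $s\in S$ one has $P_s=\overline{P_s}=T$; the radius function $x\mapsto\sup_{y\in T}d_T(x,y)$ has infimum $1$, approached only as $x$ tends to the missing endpoint of the horizontal segment, so no minimizer exists in $T$ and $h(s)$ is undefined. Thus both of your proposed fixes (``the radius function attains its minimum at a point of $P_s$ itself'' and ``taking closures throughout, which costs nothing'') are precisely the assertions that fail. Incidentally, your reason for avoiding a basepoint is a misconception: fixing a single $z\in T$ (or a single element of a single nonempty set) is an ordinary existential instantiation and needs no choice; choice is only needed for simultaneous selections over all $s\in S$, which is the part you correctly make canonical.

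The paper's construction sidesteps the completeness issue and is the natural repair of your argument: fix a basepoint $z\in T$ and define $h(x)$ to be the point $y$ with $[z,y]=\bigcap_{y_0\in f^{-1}(B(x,N))}[z,y_0]$. This intersection is an intersection of closed subsets of the compact segment $[z,y_0']$ for any one preimage point $y_0'$, is closed under passing to initial subsegments from $z$, and therefore has a farthest point from $z$ by compactness of a single geodesic segment alone; no completeness of $T$ enters. That point lies within $3N^2$ of every element of $f^{-1}(B(x,N))$, which is exactly the input your steps (iii) and (iv) need, and from there your triangle-inequality bookkeeping (with a constant such as $M=9N^2$) goes through essentially verbatim.
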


\begin{proof}  We suppose that $f: (T, d) \rightarrow (T', d')$ is a quasi-isometry, with $T$ an $\mathbb{R}$-tree.  Let $N \in \omega \setminus \{0\}$ be a corresponding quasi-isometry constant.  We may assume that both $T$ and $T'$ are nonempty.  Fix $z\in T$.  We define a map $h: (T', d') \rightarrow (T, d)$ by letting $h(x) = y$ where $[z, y] = \bigcap_{y_0\in f^{-1}(B(x, N))}[z, y_0]$.

We check (using only \textbf{ZF}) that the map $h$ is well defined.  We show that for every $x\in T'$ there is a unique $y\in T$ such that $[z,y] = \bigcap_{y_0\in f^{-1}(B(\{x\}, N))}[z, y_0]$, and the axiom of replacement implies that such a map $h$ exists.  Let $x\in T'$ be given.  We know by assumption that $B(x, N)$ intersects the image of $f$ nontrivially, so $f^{-1}(B(x, N))$ is nonempty.  Select $y_0'\in f^{-1}(B(x, N))$.  Notice that if $w\in \bigcap_{y_0\in f^{-1}(B(x, N))}[z, y_0]$ and $w'\in [z,w]$ then $w'\in  \bigcap_{y_0\in f^{-1}(B(x, N))}[z, y_0]$ as well.  Then $\bigcap_{y_0\in f^{-1}(B(x, N))}[z, y_0]$ is a subset of $[z, y_0']$ which is closed under taking elements of geodesic segments from $z$.  Moreover, $\bigcap_{y_0\in f^{-1}(B(x, N))}[z, y_0]$ is compact as an itersection of closed subsets of the compact metric space $[0,y_0']$.  The set $\{t\in[0, d(z, y_0)]: (\exists w\in \bigcap_{y_0\in f^{-1}(B(x, N))}[z, y_0]) d(z, w) = t\}$ is isometric with the compact space $\bigcap_{y_0\in f^{-1}(B(x, N))}[z, y_0]$ and so $\{t\in[0, d(z, y_0)]: (\exists w\in \bigcap_{y_0\in f^{-1}(B(x, N))}[z, y_0]) d(z, w) = t\}$ contains a supremum $s$, which corresponds under the isometry to a point $y\in  \bigcap_{y_0\in f^{-1}(B(x, N))}[z, y_0]$.  Then we know $[z, y]\subseteq  \bigcap_{y_0\in f^{-1}(B(x, N))}[z, y_0]$, and that $\bigcap_{y_0\in f^{-1}(B(x, N))}[z, y_0] \subseteq [z,y] \subseteq [z,y_0']$ is clear.  That $y$ is unique follows from the fact that any $y'\in T$ is uniquely determined by the geodesic $[z,y']$.

We now show (using only \textbf{ZF}) that $h$ is a quasi-isometry and we will be done.  We claim that $M = 9N^2$ is a quasi-isometry constant for $h$.  Note first that for a given $x\in T'$ that $y_0, y_1\in f^{-1}(B(x, N))$ implies that $d'(f(y_0), f(y_1)) \leq 2N$, so that $$\frac{1}{N}d(y_0, y_1) -N \leq d'(f(y_0), f(y_1)) \leq 2N$$ from which we obtain $d(y_0, y_1) \leq 3N^2$.  As $T$ is a tree it follows that $h(x)$ is distance at most $3N^2$ from any $y_0\in  f^{-1}(B(x, N))$, for if $y_0, y_1\in  f^{-1}(B(x, N))$, the equality $[z, y_0]\cap [z, y_1] = [z, w]$ implies $d(w, y_0) \leq 3N^2$.  Then $d(w, h(f(w))) \leq 3N^2 < 9N^2$, so that $B(h(T'),  9N^2) = T$.  Let $x_0, x_1 \in T'$ and select $y_0, y_1\in T$ such that $d'(f(y_0), x_0), d'(f(y_1), x_1)\leq N$.  Then as noted we have $d(y_0, h(x_0)), d(y_1, h(x_1)) \leq 3N^2$.  Then

\begin{center}

$\frac{1}{M}d'(x_0, x_1) - M  \leq \frac{1}{N} d'(x_0, x_1) - 9N^2$

$\leq \frac{1}{N}(d'(x_0, x_1)-N-N)-7N^2$

$\leq \frac{1}{N}(d'(x_0, x_1)-d'(x_0, f(y_0))-d'(x_1, f(y_1))) -7N^2$

$\leq \frac{1}{N}d'(f(y_0), f(y_1))-N-6N^2$

$\leq d(y_0, y_1) - 6N^2$

$\leq d(y_0, y_1) - d(h(x_0), y_0)-d(h(x_1), y_1)$

$\leq d(h(x_0), h(x_1))$

$\leq d(h(x_0), y_0) + d(h(x_1), y_1) + d(y_0, y_1)$

$\leq 6N^2 + d(y_0, y_1)$

$\leq 6N^2 + Nd'(f(y_0), f(y_1)) + N^2$

$\leq 6N^2 +N(d'(x_0, x_1) + d'(x_0, f(y_0)) + d'(x_1, f(y_1))) +N^2$

$\leq 9N^2 + Nd'(x_0, x_1) \leq Md'(x_0, x_1) + M$
\end{center}

\noindent which completes the proof.
\end{proof}

\end{section}

\author{Samuel M. Corson}
\address{Mathematics Department\\
1326 Stevenson Center\\
Vanderbilt University\\
Nashville, TN 37240\\
USA}
\email{samuel.m.corson@vanderbilt.edu}

\end{document}